\documentclass[11pt,a4paper]{amsart}
\usepackage[T1]{fontenc}
\usepackage{lmodern}
\usepackage{microtype}
\usepackage[a4paper,textwidth=150mm,textheight=240mm,centering]{geometry}
\usepackage{mathtools,amssymb}
\usepackage{enumitem}
\usepackage[hidelinks,pdfusetitle]{hyperref}
\usepackage{bookmark}
\hypersetup{pdftitle={Covering dimension and geometric dimension of CAT spaces},pdfsubject={Research manuscript},pdfkeywords={CAT spaces, geometric dimension, covering dimension, local homology, free abelian groups}}
\setlist[enumerate]{leftmargin=*,itemsep=3pt,topsep=5pt}
\allowdisplaybreaks[1]

\newtheorem*{conjecture}{Conjecture}
\newtheorem*{stepranstheorem}{Theorem (Stepr\={a}ns)}
\newtheorem{maintheorem}{Theorem}

\newtheorem{theorem}{Theorem}[section]
\newtheorem{proposition}[theorem]{Proposition}
\newtheorem{lemma}[theorem]{Lemma}

\theoremstyle{definition}
\newtheorem{definition}[theorem]{Definition}
\theoremstyle{remark}
\newtheorem{remark}[theorem]{Remark}
\newtheorem*{remark*}{Remark}
\numberwithin{equation}{section}

\newcommand{\Z}{\mathbb Z}
\newcommand{\R}{\mathbb R}

\newcommand{\Sn}{\mathbb S}
\newcommand{\CAT}{\operatorname{CAT}}
\newcommand{\gdim}{\operatorname{dim}_{\!G}}

\newcommand{\cdim}{\operatorname{cd}}
\newcommand{\mesh}{\operatorname{mesh}}
\newcommand{\Size}{\operatorname{size}}
\newcommand{\Hom}{\operatorname{Hom}}
\newcommand{\Ext}{\operatorname{Ext}}

\newcommand{\sd}{\operatorname{sd}}
\newcommand{\Bdd}{\mathcal B}
\newcommand{\redH}{\widetilde H}

\newcommand{\into}{\hookrightarrow}

\title[Dimension of CAT spaces]{Dimension of CAT spaces}
\author{Alexander Lytchak}
\date{Working manuscript --- 5 September 2026}
\subjclass[2020]{53C23, 54F45, 55N10, 20K20}
\keywords{CAT spaces, dimension, free abelian groups}

\begin{document}
\begin{abstract}
We prove that the covering dimension of a metric space with curvature bounded above equals its geometric dimension, without a separability assumption. This resolves a conjecture formulated by Kleiner that goes back to Gromov's work.
\end{abstract}
\maketitle

\section{Introduction}

The geometric dimension $\gdim X$ of a metric space $X$ with curvature bounded above in the sense of Alexandrov was introduced by Bruce Kleiner in \cite{Kleiner}. He proved that it is the supremum of the dimensions of Euclidean balls that admit topological embeddings into $X$. The geometric dimension also coincides with the supremum of the covering dimensions $\dim K$ of compact subsets $K\subset X$.
Kleiner formulated the following conjecture in \cite[p.~412]{Kleiner}, tracing its origin to Gromov's work \cite[p.~133]{Gromov}:

\begin{conjecture}
For every metric space with curvature bounded above, the covering dimension coincides with the geometric dimension.
\end{conjecture}

The locally compact case was settled in \cite[Theorem~A]{Kleiner}; as observed in \cite[p.~412]{Kleiner}, the proof also applies to separable spaces. The conjecture was subsequently restated in \cite[Conjecture~14.19]{AKP}.
Our main result resolves this conjecture in full generality:

\begin{maintheorem}\label{thm:main}
Let $X$ be a CBA space. Then
\[
                         \dim X=\gdim X.
\]
\end{maintheorem}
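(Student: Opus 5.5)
Since $\gdim X$ equals the supremum of $\dim K$ over compact $K\subset X$, and covering dimension is monotone on closed subsets of metric spaces, the inequality $\gdim X\le \dim X$ is immediate. The content of Theorem~\ref{thm:main} is therefore the reverse inequality $\dim X\le n$ when $n=\gdim X<\infty$. (If $\gdim X=\infty$ there is nothing to prove.) I would use the characterization of covering dimension of a metric space by maps to nerves. It suffices to show that for every open cover $\mathcal U$ of $X$ there is a locally finite open refinement of order $\le n+1$. Equivalently, for every $\varepsilon$-fine cover, the canonical map $X\to|N(\mathcal U)|$ into the nerve can be pushed, after refinement, into the $n$-skeleton.

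\textbf{Step 1 (local geometry).} Work at a scale $r$ much smaller than the CAT-radius, so that balls of radius $r$ are uniquely geodesic and contractible via geodesic homotopies, and nearby points have close geodesics. Using these straight-line homotopies, I would build a ``geodesic realization'' map $\Phi\colon |N|^{(k)}\to X$ on each skeleton of the nerve of a cover by small balls. Vertices go to the centers, and higher simplices are filled inductively by geodesic coning. This gives an approximate inverse to the nerve map $\psi\colon X\to |N|$: the composition $\Phi\circ\psi$ is close to the identity, via a geodesic homotopy.

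\textbf{Step 2 (dimension via local homology).} By Kleiner, $\gdim X\le n$ means that $H_k(U,U\setminus\{x\})=0$ for $k>n$ and all small open $U$. More usefully, any singular $k$-chain with $k>n$ supported in a small ball bounds after pushing into smaller balls. I would turn this into a compression statement: any map of a finite $(n+1)$-simplex boundary into a small ball that extends over the simplex can be deformed, rel nothing but with small tracks, so that its extension misses a point. I expect to get this from the vanishing of local homology through the Hopf/Alexandroff obstruction theory. For homological dimension $n$ this requires the cohomological dimension $\dim_{\Z}\le n$, which agrees with covering dimension only for finite-dimensional spaces. So I would first prove $\dim X<\infty$ using Step 1, by mapping $X$ into the nerve at a fixed scale and showing that this nerve's image has controlled dimension.

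\textbf{Step 3 (the non-separable issue).} This is the main obstacle. In the separable case one reduces to $\sigma$-compact pieces and the sum theorem. In general, the nerve of a cover of a non-separable space is an infinite-dimensional-in-width simplicial complex with uncountably many vertices. Cohomological dimension arguments then pass through groups like $H^{n+1}$ of such complexes with coefficients in large free abelian groups. Here I suspect one needs facts about $\Ext$ of uncountable products and sums of $\Z$ (Stepr\={a}ns-type results, Whitehead-group issues), which is consistent with the paper's keywords. My plan would be as follows. Represent $X$ by the inverse system of nerves. Show that the obstruction to compressing $\psi$ into the $n$-skeleton is a class in $H^{n+1}(X;\pi_n)$, where the coefficient group is a free abelian group $\bigoplus\Z$ of possibly uncountable rank. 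Show this class vanishes using $\dim_{\Z}X\le n$ plus a result that cohomological dimension with respect to $\Z$ controls cohomology with free coefficients of arbitrary rank. I would first try to avoid set-theoretic subtleties by working locally: use a $\sigma$-discrete base (Bing–Nagata–Smirnov) and the countable sum theorem. This reduces to showing that each discrete family of small balls has closure of dimension $\le n$, where geometric dimension of each small ball suffices.
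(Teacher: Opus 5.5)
\paragraph{Where the proposal falls short.} Your lower bound $\gdim X\le\dim X$ is fine and is exactly the paper's. The upper bound, however, rests on a step you only announce: obtaining $\cdim X\le n$ from Kleiner's homological vanishing $H_q(V,U)=0$ for $q>n$ (``I expect to get this \dots\ through the Hopf/Alexandroff obstruction theory''). Homological vanishing above $n$ does not give cohomological vanishing in degree $n+1$. Once $H_{n+1}(X,U)=0$, the universal coefficient theorem gives $H^{n+1}(X,U)\cong\Ext^1_{\Z}(H_n(X,U),\Z)$. So what you actually need is that this $\Ext$ vanishes for the top-dimensional groups $H_n(V,U)$. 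The paper secures this by proving these groups are free abelian (Theorem~\ref{thm:free}).

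Without freeness, the same argument only gives $\dim X\le n+1$. That bound comes from the UCT together with the Cencelj--Dydak--Mitra--Vavpeti\v{c} equality $\dim T=\cdim T$ for metrizable ANRs. This, and not a fixed-scale nerve estimate (for which you give no mechanism), is where finiteness of $\dim X$ comes from. Freeness genuinely needs geometric input: Sklyarenko's two-dimensional compact absolute retract has local group $H_2\cong\Z[1/2]$.

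\paragraph{Step 3 misplaces the difficulty.} Once $\cdim X\le n$ is known, nothing about cohomology with large free coefficient groups or $\Ext$ of products of $\Z$ is needed, since $\dim=\cdim$ for metrizable ANRs. The non-separable danger sits in $H_n(V,U)$ itself. Localization embeds it into $\prod_{x}H_{n-1}(\Sigma_xX)$, a product of groups that are free by induction, but subgroups of such products need not be free (Baer--Specker).

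The missing idea is quantitative. Define the size of a class as the minimal $\ell^1$-norm of a representative whose mesh is small relative to the distance of its boundary from $U$. The directional maps $\mathcal D_x$ increase this size by at most a factor $(m+1)$, uniformly in $x$ (Proposition~\ref{prop:directional-size}, which rests on the $4/r$-Lipschitz bound for $\log_x$ on thin annuli). Hence the image lies in a bounded product, which yields an integer-valued homogeneous norm, and Stepr\={a}ns's theorem gives freeness.

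Finally, your $\sigma$-discrete-base reduction is circular. Small balls in a non-separable CBA space are still non-separable (e.g.\ a metric tree with uncountably many branches at a point). So the claim that ``geometric dimension of each small ball suffices'' is just the original theorem for a $\CAT(\kappa)$ ball. The local-to-global passage is the easy part; compare Proposition~\ref{prop:local-to-global}.
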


Kleiner's results imply $\dim X\ge\gdim X$. His homological characterization of geometric dimension, together with the universal coefficient theorem and Theorem~\ref{thm:anr-cohomological-dimension} below, also gives the upper bound $\dim X\le\gdim X+1$; see Proposition~\ref{prop:dimension-criterion}. Theorem~\ref{thm:anr-cohomological-dimension} states the equality $\dim T=\cdim T$ between covering dimension and cohomological dimension for metrizable ANRs. The separable case is proved in \cite[Corollary~2.12]{DydakKoyama}, while the nonseparable case turns out to be a direct consequence of \cite[Theorem~0.2]{CenceljDydakMitraVavpetic}.

The proof of equality in Theorem~\ref{thm:main} reduces to establishing freeness of top-dimensional relative homology groups. This is the more technical contribution of the paper:

\begin{maintheorem}\label{thm:free}
Let $X$ be a CBA space, and assume that $\gdim X\le n$, where $n\ge0$ is an integer. For every open pair $U\subset V\subset X$, the group
\[
                              H_n(V,U)
\]
is free abelian.
\end{maintheorem}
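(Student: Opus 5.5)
We reduce freeness of $H_n(V,U)$ to a statement about chains supported in small, controlled pieces of $X$. The key fact from Kleiner's work is that if $\gdim X\le n$, then $H_k(V,U)=0$ for all $k>n$ and all open pairs, and local homology vanishes above degree $n$. The top-dimensional group $H_n(V,U)$ is then a group of cycles: it sits inside a group of relative $n$-chains modulo nothing from degree $n+1$. The plan is to exhibit $H_n(V,U)$ as a subgroup of a free abelian group, and then use that subgroups of free abelian groups are free (no separability needed, by the well-ordering/Zorn argument of Dedekind).

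\textbf{Step 1: a model with no $(n+1)$-simplices.} Use CBA geometry, namely uniqueness and continuity of short geodesics and geodesic contractions of small balls, to construct a "straightening" of singular chains. Fix a cover of $V$ by balls of radius below the convexity radius, chosen so that balls meeting $U$ that are small relative to the distance to $X\setminus U$ lie in $U$. Geodesic simplices spanned by vertices in a common small ball give a chain complex $C^{g}_*(V,U)$ of straight chains. Via barycentric subdivision and straightening homotopies, this complex computes $H_*(V,U)$.

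\textbf{Step 2: top degree is a cycle group.} Since $H_{n+1}$ and local homology vanish above $n$, we would like to realise $H_n(V,U)$ as $\ker\partial_n$ of a complex of free groups in which the images of the $(n+1)$-chains are already zero in homology. Concretely, show that $H_n(V,U)\to H_n(V,V\setminus K)$ summed over compact pieces is injective after passing to a suitable direct limit of nerves. The cleanest route is to write $H_n(V,U)=\varinjlim H_n(N_\alpha,N_\alpha^U)$ over nerves $N_\alpha$ of refining covers. For each nerve, compose with the map into its $n$-skeleton; this uses the vanishing of relative homology above $n$ and the fact, via Proposition~\ref{prop:dimension-criterion} and Theorem~\ref{thm:anr-cohomological-dimension}, that compact subsets have $\dim\le n$. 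On an $n$-dimensional simplicial pair, $H_n$ is $\ker\partial_n\subset C_n$, which is free. So $H_n(V,U)$ is a direct limit of free groups whose transition maps should be shown injective with pure image.

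\textbf{Step 3 (main obstacle).} The crux is to produce enough injectivity and purity, since direct limits of free groups need not be free (e.g.\ $\mathbb Q$). We would avoid limits altogether by building a single injective homomorphism $H_n(V,U)\to\prod$ or $\bigoplus$ of local groups $H_n(X,X\setminus\{x\})$, each of which Kleiner shows is free or torsion-free with the needed properties. That would leave freeness of the local groups as the one point still to be settled for this route. Injectivity should follow from a Mayer--Vietoris argument over a cover by convex balls: a class vanishing locally everywhere vanishes, because $H_{n+1}$ of every intersection pair is zero, so there are no $\lim^1$-type obstructions in top degree. Landing in a direct sum requires that a class is locally nonzero only on a set supported by one compact cycle. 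Its support is then a compact set of dimension $\le n$, and we expect to use the Čech description there to see $H_n$ of a compact pair as a subgroup of a free group.

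If this works, then $H_n(V,U)$ is a subgroup of a free abelian group and hence free. If landing in a sum fails, the fallback is to show $H_n(V,U)$ is $\aleph_1$-free and apply a Pontryagin/Eklof-type criterion.
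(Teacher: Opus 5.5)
Your proposal has a genuine gap. None of the routes you sketch explains where freeness of an \emph{uncountable} group comes from, and the curvature bound is never used in an essential way.

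In Step~2, everything you invoke is purely topological: vanishing of $H_q$ above $n$ on open pairs, $\dim K\le n$ for compacta, the ANR property, and nerves. All of it holds for Sklyarenko's compact two-dimensional absolute retract $S$. Its local group $H_2(S,S\setminus\{\infty\})\cong\Z[1/2]$ is exactly a direct limit $\Z\xrightarrow{2}\Z\xrightarrow{2}\cdots$ of free groups along injective but non-pure maps. So the purity you postpone is precisely what has to be proved, and no nerve argument of this kind can supply it. There is also a circularity: factoring nerve maps of the possibly nonseparable $V$ through $n$-skeleta requires $\dim V\le n$, which is Theorem~\ref{thm:main}. Proposition~\ref{prop:dimension-criterion} gives that bound only once freeness is known; otherwise it gives only $n+1$. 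Your fallback fails too. $\aleph_1$-freeness does not imply freeness for uncountable groups: $\prod_{k\ge1}\Z$ has all countable subgroups free by Specker's theorem but is not free by Baer's. Freeness of countable subgroups of $H_n(V,U)$ is already what Kleiner's results give, so that route adds nothing.

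In Step~3, injectivity of localization is correct; it is Lemma~\ref{lem:local-detection}. But the target cannot be a direct sum over points. The support of a nonzero top-dimensional class is compact but typically infinite; for the fundamental class of a flat $n$-torus it is the whole torus. An element of a direct sum has only finitely many nonzero coordinates. A direct sum arises legitimately only when you index by a locally finite cover, as in Proposition~\ref{prop:local-to-global}, and that merely reduces the problem to $\CAT(1)$ pieces. Into the full product, injectivity is useless by itself, by the same Baer--Specker example. Moreover, Kleiner does not show the local groups are free. We have $H_n(X,X\setminus\{x\})\cong\redH_{n-1}(\Sigma_xX)$, and its freeness is the theorem itself in degree $n-1$ for the $\CAT(1)$ space $\Sigma_xX$, so you need an induction on dimension.

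The missing idea is a \emph{uniform quantitative} bound on the localized images. The paper proceeds as follows.
\begin{enumerate}
\item It defines $\Size_m$ as the least $\ell^1$-norm of a representative whose mesh is at most $32^{-m}$ times the truncated distance from its boundary to $Y\setminus U$.
\item It proves $\Size_{m-1}(\mathcal D_x\alpha;\Sigma_xY)\le(m+1)\Size_m(\alpha;Y,U)$ for every $x$. This comes from cutting the chain near $x$ and using the $4/r$-Lipschitz bound on $\log_x$ on annuli.
\item Hence $\mathcal D_U$ lands in the \emph{bounded} product of the groups $H_{m-1}(\Sigma_xY)$, which carry inductively constructed integer-valued homogeneous norms.
\item The pulled-back supremum norm is integer-valued on $H_m(Y,U)$, and Stepr\={a}ns's theorem, which generalizes N\"obeling's, gives freeness.
\end{enumerate}
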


Unlike the general ANR results used below, Theorem~\ref{thm:free} requires geometric input: top-dimensional relative homology need not be free even for a two-dimensional compact absolute retract. See Sklyarenko's example in \cite[Example~2.7, p.~7]{MelikhovShchepin} and the discussion at the end of Subsection~\ref{subsec:homological-criterion}. For local versus relative freeness, see Remark~\ref{rem:local-freeness}.

All homology groups below are singular homology groups. All homology and cohomology groups, and all chain groups, have integer coefficients, which are omitted from the notation.

Although the freeness of every countable subgroup of $H_n(V,U)$ in Theorem~\ref{thm:free} may be deduced from Kleiner's results \cite{Kleiner}, proving freeness of the whole group is much subtler.
We use the supports of top-dimensional cycles, as in \cite{KleinerLeeb,BestvinaKleinerSageev,Huang} and \cite[Section~4]{LytchakStadler}, to detect a class by its images in spaces of directions. The essential step is to construct an integer-valued homogeneous norm on each top-dimensional relative homology group. We then apply Stepr\={a}ns's theorem \cite{Steprans}, which states that an abelian group admitting a discrete homogeneous norm is free. This generalizes the bounded-function form of N\"obeling's theorem \cite{Nobeling}: for every set $I$, the group
\begin{equation}\label{eq:intro-bounded}
 \Bdd(I)=\bigl\{f:I\longrightarrow\Z:\sup_{i\in I}|f(i)|<\infty\bigr\}
\end{equation}
is free abelian. 

In Section~\ref{sec:prelim}, we recall the basic properties of spaces with upper curvature bounds needed in the proof. Section~\ref{sec:criterion} collects the relevant topological results: the equality of covering and cohomological dimensions for metrizable ANRs, the relation between homological and cohomological dimensions, and the localization of top-dimensional homology classes. Combining these results, Section~\ref{sec:reduction} reduces Theorem~\ref{thm:main} to Theorem~\ref{thm:free} and the proof of the latter to the case of $\CAT(1)$ spaces.

Section~\ref{sec:induction} contains the core of the paper, the proof of Theorem~\ref{thm:free}. Localization embeds a top-dimensional relative homology group into a direct product of top-dimensional homology groups of spaces of directions. The latter groups are free by induction, but a subgroup of a direct product of free abelian groups need not be free. To overcome this difficulty, we introduce a class size derived from the $\ell^1$-norm on sufficiently small chains and construct homogeneous norms on the homology groups controlled by this size. The key estimate bounds the sizes of all directional images uniformly in terms of the size of the original class. At this quantitative step, the curvature bound enters only through the elementary logarithmic estimate of Lemma~\ref{lem:log-lipschitz}; see Proposition~\ref{prop:directional-size}. The resulting norm is discrete, so Stepr\={a}ns's theorem completes the proof.

\subsection*{Acknowledgements}
The author was supported in part by the German Research Foundation under Germany's Excellence Strategy -- EXC-2047/1 -- 390685813.
This paper arose in close collaboration with ChatGPT, an AI assistant developed by OpenAI. The collaboration involved exploring proof strategies, refining arguments, locating references, and drafting and revising the manuscript. The author takes full responsibility for the mathematical content and the final text.

\section{Preliminaries}\label{sec:prelim}

\subsection{Topological notions}
For a metric space $T$ and an integer $n\ge0$, the inequality $\dim T\le n$ means that every open cover of $T$ has a locally finite open refinement of multiplicity at most $n+1$. We put $\dim\varnothing=-1$ and write $\dim T=\infty$ if no finite bound exists; see \cite[Chapters~3 and~4]{Engelking}.  Covering dimension is monotone under inclusion.

A metric space $T$ is an \emph{absolute neighborhood retract}, or ANR, if every realization of $T$ as a closed subset of a metric space admits a retraction from an open neighborhood onto $T$.

Throughout the paper, simplicial complexes are endowed with the metric topology on their geometric realizations. Every such realization is a metrizable ANR, and every CW complex or metrizable ANR has the homotopy type of such a realization; see \cite[pp.~245 and~247]{Mardesic} and \cite{Dowker}.

\subsection{CBA spaces}
We assume familiarity with the basic geometry of spaces with upper curvature bounds and refer to \cite{AKP,BH,Kleiner} for background. Here we fix notation. Throughout the paper, every $\CAT(\kappa)$ space is assumed to be metrically complete. Geodesic completeness is not assumed.

For $\kappa\in\R$, put $D_\kappa=\infty$ if $\kappa\le0$ and $D_\kappa=\pi/\sqrt\kappa$ if $\kappa>0$. In a $\CAT(\kappa)$ space, points at distance less than $D_\kappa$ are joined by a unique geodesic, which depends continuously on its endpoints \cite[II.1.4]{BH}. When $\kappa>0$, we do not require geodesics between points at distance at least $D_\kappa$. In particular, $\CAT(1)$ spaces need not be connected.

We say that a metric space $X$ has \emph{curvature bounded above}, or is a \emph{CBA space}, if every point $x\in X$ has a closed neighborhood that is $\CAT(\kappa)$ with its induced metric; the bound $\kappa$ may depend on $x$. A CBA space need not be globally complete; the closed $\CAT(\kappa)$ neighborhoods in this definition make it locally complete. Every open subset of a CBA space is an ANR \cite[Theorem~3.2]{Kramer}.

After rescaling and, if necessary, increasing the curvature bound, every global $\CAT(\kappa)$ space can be regarded as a $\CAT(1)$ space \cite[II.1.12]{BH}. These operations preserve the topology and the angular metrics on spaces of directions.

For $x$ in a CBA space $X$, the space of directions $\Sigma_xX$ is the completion of the space of geodesic directions at $x$ with its angular metric. $\Sigma_xX$ is a $\CAT(1)$ space.

Let $X$ be a $\CAT(1)$ space and $x\in X$. Every open ball $B_R(x)$ with $0<R\leq \pi$ is contractible by the radial geodesic homotopy \cite[II.1.4]{BH}. The \emph{angular logarithmic map}
\[
 \log_x:B_\pi(x)\setminus\{x\}\longrightarrow\Sigma_xX
\]
sends $y$ to the direction of the geodesic $xy$. The restriction of $\log_x$ to $B_R(x)\setminus\{x\}$ is a homotopy equivalence for every $0<R\le\pi$. For $R<\pi/2$, this is \cite[Theorem~3.5]{Kramer}; larger radii reduce to this case by radially shrinking the punctured ball into a smaller one.

The contractibility of $B_\pi(x)$, excision, the boundary homomorphism, and the logarithmic homotopy equivalence define, for every $m\ge1$, a canonical isomorphism
\begin{equation}\label{eq:local-link}
 \ell_x:H_m(X,X\setminus\{x\})
       \xrightarrow{\ \cong\ }\redH_{m-1}(\Sigma_xX).
\end{equation}
It is the composition
\[
\begin{aligned}
 H_m(X,X\setminus\{x\})
   &\xrightarrow{\ \mathrm{exc}^{-1}\ }
       H_m(B_\pi(x),B_\pi(x)\setminus\{x\})\\
   &\xrightarrow{\ \partial\ }
       \redH_{m-1}(B_\pi(x)\setminus\{x\})
     \xrightarrow{\ (\log_x)_*\ }\redH_{m-1}(\Sigma_xX).
\end{aligned}
\]
Here $\mathrm{exc}^{-1}$ is the inverse of the inclusion-induced excision isomorphism. A chain $b$ in $B_\pi(x)$ whose boundary avoids $x$ represents a local class, and $\ell_x$ sends this class to $[(\log_x)_\#\partial b]$.

The following coarse estimate follows from $\CAT(1)$ comparison and spherical geometry; cf. \cite[Section~3.1]{LytchakNagano}.

\begin{lemma}[Lipschitz control of the logarithmic map]\label{lem:log-lipschitz}
Let $X$ be a $\CAT(1)$ space, let $x\in X$, and let $0<r\le R\le1/2$. On the annulus
\[
 A_{r,R}(x)=\{y\in X:r\le d(x,y)\le R\},
\]
the angular logarithmic map is $4/r$-Lipschitz:
\begin{equation}\label{eq:log-lipschitz}
 d_{\Sigma_xX}(\log_x y,\log_x z)
 \le \frac{4}{r}\,d(y,z)
 \qquad(y,z\in A_{r,R}(x)).
\end{equation}
\end{lemma}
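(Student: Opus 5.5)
We compare the angle $\angle_x(y,z)$ with the comparison angle $\tilde\angle_x(y,z)$ in the model sphere $\Sn^2$. Since all distances involved are at most $1/2<\pi/2$, the triangle $xyz$ has perimeter less than $2\pi$. Moreover, $y,z\in B_\pi(x)\setminus\{x\}$, so $\log_x$ is defined on the annulus. In a $\CAT(1)$ space the Alexandrov angle between the geodesics $xy$ and $xz$ is at most the spherical comparison angle \cite[II.1.7]{BH}. Directions in $\Sigma_xX$ are measured by the angular metric, so $d_{\Sigma_xX}(\log_x y,\log_x z)=\angle_x(y,z)$ for geodesic directions. Hence it suffices to prove the purely spherical estimate
\[
 \tilde\angle_x(y,z)\le \frac4r\,d(y,z)
\]
for a spherical triangle with side lengths $a=d(x,y)$, $b=d(x,z)$ in $[r,R]$, $R\le1/2$, and third side $c=d(y,z)$.

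For the spherical estimate, I would reduce to the isoceles case by a radial projection in the model. Place $\bar x,\bar y,\bar z$ in $\Sn^2$ and, assuming $a\le b$, let $\bar z'$ be the point on the geodesic $\bar x\bar z$ at distance $a$ from $\bar x$. By the triangle inequality, $d(\bar y,\bar z')\le d(\bar y,\bar z)+d(\bar z,\bar z')=c+(b-a)\le 2c$, since $|b-a|\le c$. In the isoceles spherical triangle $\bar x\bar y\bar z'$ with legs $a$ and apex angle $\theta=\tilde\angle_x(y,z)$, the spherical law of sines gives $\sin(d(\bar y,\bar z')/2)=\sin a\,\sin(\theta/2)$. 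Since $\theta\le\pi$, we have $\sin(\theta/2)\ge\theta/\pi$. Since $a\le1/2$, we have $\sin a\ge a(1-a^2/6)\ge 0.95\,a$. This gives $d(\bar y,\bar z')\ge 2\sin a\sin(\theta/2)\ge 2\cdot0.95\,a\,\theta/\pi\ge 0.6\,r\,\theta$. Combining, $\theta\le d(\bar y,\bar z')/(0.6 r)\le 2c/(0.6r)\le 4c/r$, which is the claimed constant.

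The only point needing care is whether the constant $4$ is reached. Here $2/0.6\approx3.33<4$, so there is slack, and cruder bounds such as $\sin a\ge 2a/\pi$ on $[0,\pi/2]$ would give $2\pi/2\cdot\ldots$ and need checking, which is why I would use the sharper Taylor bound. A second minor point concerns the completion: $\Sigma_xX$ is the completion of the geodesic directions, but $\log_x$ takes values among geodesic directions, whose angular distance is exactly the Alexandrov angle. So no limiting argument is needed, and the estimate \eqref{eq:log-lipschitz} holds for all $y,z\in A_{r,R}(x)$.
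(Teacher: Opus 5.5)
Your proposal is correct and is essentially the argument the paper has in mind. The paper gives no details beyond citing $\CAT(1)$ angle comparison and spherical geometry (cf.\ Lytchak--Nagano), and your reduction via $\angle_x(y,z)\le\tilde\angle_x(y,z)$ to an isosceles spherical triangle, yielding $\theta\le 2c/(0.6\,r)<4c/r$, correctly fills these in. The sharper bound $\sin a\ge 0.95\,a$ is genuinely needed here: the cruder bound $\sin a\ge 2a/\pi$ would only give the constant $\pi^2/2\approx 4.93$.
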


In the form used in Subsection~\ref{subsec:inductive-step}, if $0<r\le1/8$, $0<\delta\le r/2$, and $E\subset A_{r,r+\delta}(x)$ has diameter at most $\delta$, then
\begin{equation}\label{eq:log-small-sets}
                    \operatorname{diam}(\log_x E)\le\frac{4\delta}{r}.
\end{equation}

\subsection{Geometric dimension}\label{subsec:geometric-dimension}
For a CBA space $X$, geometric dimension is characterized by $\gdim\varnothing=-1$, by $\gdim X=0$ for nonempty discrete $X$, and the recursion
\begin{equation}\label{eq:dimension-drop}
 \gdim X\le n
 \quad\Longleftrightarrow\quad
 \gdim\Sigma_xX\le n-1 \quad\text{for every }x\in X
 \qquad(n\ge1).
\end{equation}

We use two conclusions of Kleiner's dimension theorem \cite[Theorem~A]{Kleiner}:
\begin{align}
 \gdim X
   &=\sup\{\dim K:K\subset X\text{ compact}\},\label{eq:compact-dimension}\\
 \gdim X
   &=\sup\{q:H_q(V,U)\ne0
                    \text{ for some open }U\subset V\subset X\}.
                    \label{eq:homological-dimension}
\end{align}
In particular, $\gdim X\le n$ implies
\begin{equation}\label{eq:open-vanishing}
 H_q(V,U)=0 \qquad(q>n)
\end{equation}
for every open pair. Here and below, ``open pair'' means that both sets are open in the ambient space; either may be empty.

\section{A dimension criterion for metrizable ANRs}\label{sec:criterion}

We formulate the topological reduction in terms of cohomological dimension. Throughout this section, cohomology has constant integer coefficients. We write $\check H^q$ for \v{C}ech cohomology and $H^q$ for singular cohomology.

\subsection{Cohomological dimension}
For a metrizable space $T$, the \emph{integral cohomological dimension of $T$} is
\begin{equation}\label{eq:cech-dimension-definition}
 \cdim T=\sup\bigl\{q\ge0:\check H^q(T,A)\ne0
                 \text{ for some closed }A\subset T\bigr\}.
\end{equation}
We take the supremum of the empty set to be $-1$ and write $\cdim T=\infty$ when no finite bound exists. Thus $\cdim \varnothing=-1$. Equivalently, for $n\ge0$,
\[
 \cdim T\le n
 \quad\Longleftrightarrow\quad
 \check H^q(T,A)=0
 \quad\text{for every closed }A\subset T\text{ and every }q>n.
\]
This is the usual closed-pair definition; see \cite[Section~1]{Dranishnikov}. It uses \v{C}ech cohomology, not singular cohomology.

\subsection{Absolute extensors and equality of dimension}\label{subsec:absolute-extensors}
We recall the notion of absolute extensors, which is of fundamental importance in dimension theory. We call a space $L$ an \emph{absolute extensor} of a space $T$ if every continuous map from a closed subset of $T$ to $L$ extends continuously over $T$, and we write $L\in\operatorname{AE}(T)$.

For metrizable $T$, this property depends only on the homotopy type of $L$ among metrizable ANRs and CW complexes. More precisely, if $L$ and $L'$ are two such spaces and $L\simeq L'$, then
\[
 L\in\operatorname{AE}(T)
 \quad\Longleftrightarrow\quad
 L'\in\operatorname{AE}(T).
\]
For these targets, an extension up to homotopy from a closed subset can be replaced by an exact extension: for ANRs, this is Borsuk's homotopy extension theorem \cite[p.~247]{Mardesic}; for CW complexes, see \cite[Corollaries~1.8 and~2.13 in the arXiv version]{DydakExtension}. In particular, passing between homotopy-equivalent CW and metric simplicial models does not change the absolute-extensor property.

By the sphere-extension characterization of covering dimension \cite[p.~252]{Mardesic}, for every metrizable space $T$ and every integer $n\ge0$,
\begin{equation} \label{eq: dimchar}
 \dim T\le n
 \quad\Longleftrightarrow\quad
 \Sn^n\in\operatorname{AE}(T).
\end{equation}

 Analogously, integral cohomological dimension admits an extension-theoretic characterization.   $K(\Z,n)$ is an Eilenberg--MacLane space represented by a simplicial complex with the metric topology: for $n\ge1$, it is connected, $\pi_n=\Z$, and all other homotopy groups vanish; for $n=0$, we take the discrete space $\Z$.  The extension characterization is \cite[Section~0, p.~1156]{Dydak-Walsh}
\begin{equation}\label{eq:cohomological-dimension-definition}
 \cdim T\le n
 \quad\Longleftrightarrow\quad
 K(\Z,n)\in \operatorname{AE}(T).
\end{equation}

By \cite[Lemma~3.1, p.~38]{Morita}, for every metrizable space $T$ we have the inequality
\begin{equation}\label{eq:cohomological-covering-bound}
                         \cdim T\le\dim T.
\end{equation}

The reverse inequality for separable ANRs is contained in \cite[Corollary~2.12]{DydakKoyama}. We verify this equality without a separability assumption:

\begin{theorem}[ANR dimension comparison]\label{thm:anr-cohomological-dimension}
For every metrizable ANR $T$,
\begin{equation}\label{eq:anr-cohomological-dimension}
                         \dim T=\cdim T.
\end{equation}
\end{theorem}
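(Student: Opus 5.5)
The inequality $\cdim T\le\dim T$ is \eqref{eq:cohomological-covering-bound}, so only $\dim T\le\cdim T$ needs proof. We may assume $\cdim T=n<\infty$. By \eqref{eq: dimchar} and \eqref{eq:cohomological-dimension-definition} it then suffices to show that $K(\Z,n)\in\operatorname{AE}(T)$ implies $\Sn^n\in\operatorname{AE}(T)$. The plan is to use \cite[Theorem~0.2]{CenceljDydakMitraVavpetic}. We expect it to provide, for a metrizable (in particular ANR) space $T$ and a CW complex $L$, a criterion for $L\in\operatorname{AE}(T)$ in terms of extension properties of $T$ with respect to the homotopy-theoretic pieces of $L$: its Postnikov stages, or the Eilenberg--MacLane spaces $K(\pi_k(L),k)$, or equivalently a cohomological-dimension condition. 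The comparison of extension properties along the Postnikov tower of $\Sn^n$ would then be governed by the cohomological dimension of $T$, and the ANR hypothesis would make that dimension finite-type controlled.

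Concretely, I would proceed in three steps. First, separate the small cases. For $n=0$, $\cdim T\le 0$ means that $\Z$ (discrete) is an absolute extensor, which forces $T$ to be zero-dimensional, since clopen separations of disjoint closed sets come from extending a two-valued map. Then $\Sn^0\in\operatorname{AE}(T)$ directly. For $n\ge1$ the sphere $\Sn^n$ is $(n-1)$-connected with $\pi_n=\Z$, and $K(\Z,n)$ is obtained from $\Sn^n$ by attaching cells of dimension $\ge n+2$. Second, apply the cited theorem: for metrizable ANRs, and more generally for spaces for which extension dimension is detected by cohomology, an extension property $K(\Z,n)\in\operatorname{AE}(T)$ upgrades to $\Sn^n\in\operatorname{AE}(T)$ provided the higher obstructions vanish. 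These obstructions lie in $\check H^{q+1}(T,A;\pi_q(\Sn^n))$ for $q>n$, and they vanish because $\cdim T\le n$ implies $\cdim_G T\le n$ for every abelian coefficient group $G$ in the range relevant for metrizable spaces with finite integral cohomological dimension. Third, conclude with \eqref{eq: dimchar}. Throughout, the homotopy-invariance of the AE property recalled in Subsection~\ref{subsec:absolute-extensors} lets me pass freely between CW and metric simplicial models of $\Sn^n$ and $K(\Z,n)$.

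The main obstacle is the genuinely nonseparable point. Dranishnikov's counterexamples show that $\cdim$ and $\dim$ differ for general compacta, so the ANR hypothesis must be used essentially. In the separable case \cite{DydakKoyama} uses it via local contractibility, which makes $T$ locally of the homotopy type of a polyhedron, together with the equality $\dim=\cdim$ for polyhedra. Here I would check that \cite[Theorem~0.2]{CenceljDydakMitraVavpetic} is stated for arbitrary metrizable spaces, or at least for paracompact ones, without second countability. I would also check that its hypotheses reduce to a local statement. If they do, I would verify the local statement by covering $T$ by open sets that are dominated by simplicial complexes with the metric topology. On such complexes $\cdim$ equals the maximal dimension of a simplex, which equals $\dim$. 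A locally finite refinement together with the countable sum theorem for $\dim$ of metrizable spaces (Engelking) would then globalize the bound, which avoids any separability assumption.
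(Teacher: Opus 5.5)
\textbf{There is a genuine gap, in the one step that carries the theorem's content.} Your outer reduction is correct and matches the paper. The inequality $\cdim T\le\dim T$ is \eqref{eq:cohomological-covering-bound}. Your treatment of $n=0$ is fine; the paper instead notes that $\Sn^0$ is a retract of the discrete space $\Z$. By \eqref{eq: dimchar} and \eqref{eq:cohomological-dimension-definition}, everything reduces to the implication $K(\Z,n)\in\operatorname{AE}(T)\Rightarrow\Sn^n\in\operatorname{AE}(T)$ for $n\ge1$. Your proposal does not establish this implication.

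Your second step is classical obstruction theory along the Postnikov tower of $\Sn^n$, with obstructions in $\check H^{q+1}(T,A;\pi_q(\Sn^n))$ for $q>n$ that vanish because $\cdim_G T\le n$. That argument never uses the ANR hypothesis, and it cannot work as stated. Dranishnikov's compactum $X$ has $\cdim X\le3$, so by the Bockstein inequality all these groups vanish for every closed $A$. Yet $\Sn^3\notin\operatorname{AE}(X)$ because $\dim X=\infty$. The obstruction argument only terminates when $\dim T<\infty$, which is how Alexandroff's theorem is proved. For infinite-dimensional $T$, compatible lifts to all Postnikov stages do not produce a map to $\Sn^n$. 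Finiteness of $\dim T$ is exactly what is unknown here; Proposition~\ref{prop:dimension-criterion} relies on this theorem to obtain it.

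Your fallback does not close the gap either. A homotopy domination $O\to K\to O$ by a simplicial complex says nothing about $\dim O$, since covering dimension is not a homotopy invariant: the Hilbert cube is an AR dominated by a point. Moreover, open subsets of ANRs are ANRs, so the ``local statement'' is no easier than the theorem itself.

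The missing idea is the actual content of \cite[Theorem~0.2]{CenceljDydakMitraVavpetic}. It is an extension-theoretic statement about infinite symmetric products. For a metrizable ANR $T$ and $L=\Sn^n$, it says that $SP(\Sn^n)\in\operatorname{AE}(T)$ implies $\Sn^n\in\operatorname{AE}(T)$. The Dold--Thom theorem gives $SP(\Sn^n)\simeq K(\Z,n)$. The homotopy invariance of the absolute-extensor property among CW and ANR targets, recalled in Subsection~\ref{subsec:absolute-extensors}, then turns $K(\Z,n)\in\operatorname{AE}(T)$ into $SP(\Sn^n)\in\operatorname{AE}(T)$, and hence into $\Sn^n\in\operatorname{AE}(T)$. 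This is the paper's entire proof; no obstruction theory, localization, or sum theorem is needed. The ANR hypothesis enters only through that cited theorem.
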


\begin{proof}
Suppose $\cdim T\le n$. For $n\ge 1$, apply \cite[Theorem~0.2]{CenceljDydakMitraVavpetic} to $L=\Sn^n$. This theorem applies to metrizable ANRs and says, in this case, that $SP(\Sn^n)\in \operatorname{AE}(T)$ implies $\Sn^n\in \operatorname{AE}(T)$. Here $SP$ is the infinite symmetric product, and the Dold--Thom theorem \cite[Theorem 4K.6]{Hatcher} identifies its homotopy type with $K(\Z,n)$. Thus $\Sn^n\in \operatorname{AE}(T)$, and \eqref{eq: dimchar} above gives $\dim T\le n$.  For $n=0$, the two-point space $\Sn^0$ is a retract of the discrete space $K(\Z,0)=\Z$, so the same characterization applies. Together with $\cdim T\le\dim T$, this proves the assertion. 
\end{proof}

The ANR hypothesis cannot simply be omitted: Dranishnikov constructed a compact metrizable space $X$ with $\dim X=\infty$ and $\cdim X\le3$; see \cite[Theorem~7.1]{Dranishnikov}.

\subsection{From singular cohomology to cohomological dimension}
The closed subsets occurring in~\eqref{eq:cech-dimension-definition} need not be ANRs, so one cannot simply replace \v{C}ech cohomology by singular cohomology in that definition. The following lemma gives the passage from the singular cohomology of open pairs used in this paper to cohomological dimension, via~\eqref{eq:cohomological-dimension-definition}.

\begin{lemma}\label{lem:singular-cohomological-bound}
Let $T$ be a metrizable ANR and let $n\ge1$. If
\[
                     H^{n+1}(T,U)=0
\]
for every open subset $U\subset T$, then $\cdim T\le n$.
\end{lemma}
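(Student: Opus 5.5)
We verify the extension criterion \eqref{eq:cohomological-dimension-definition}: we show that every map $f\colon A\to K(\Z,n)$ from a closed subset $A\subset T$ extends over $T$. The plan has three steps: move the problem from $A$ to an open neighborhood, translate it into singular cohomology, and then use the hypothesis.

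\emph{Step 1: reduction to an open neighborhood.} Since $K(\Z,n)$ is a metrizable ANR (a simplicial complex with the metric topology), $f$ extends to a map $\tilde f\colon U\to K(\Z,n)$ on some open neighborhood $U$ of $A$. By the Tietze-type remark in Subsection~\ref{subsec:absolute-extensors}, it suffices to extend $\tilde f|_{U'}$ up to homotopy, for a possibly smaller open $U'\supset A$. Borsuk's homotopy extension theorem then upgrades a homotopy extension to an exact one on $A$. So we shrink $U$ as needed and work with open subsets only.

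\emph{Step 2: translation into singular cohomology.} Open subsets of a metrizable ANR are ANRs and have the homotopy type of CW complexes. Hence homotopy classes $[U,K(\Z,n)]$ correspond to $H^n(U)$, with $\tilde f$ giving the class $\tilde f^*\iota$. I would use the representability of singular cohomology on spaces of CW homotopy type, with care for $n\ge1$ and the metric simplicial model. A map $g\colon T\to K(\Z,n)$ with $g|_U\simeq\tilde f$ then exists if and only if $\tilde f^*\iota$ lies in the image of the restriction $H^n(T)\to H^n(U)$. By the long exact sequence of the pair $(T,U)$, the obstruction is $\delta(\tilde f^*\iota)\in H^{n+1}(T,U)$, which vanishes by hypothesis. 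So $\tilde f^*\iota$ lifts to a class $c\in H^n(T)$. Representing $c$ by a map $g\colon T\to K(\Z,n)$ gives $g|_U\simeq \tilde f$.

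\emph{Step 3: making the homotopy an extension.} Here $g|_U\simeq\tilde f$ as maps on $U$. Restricting to $A$ gives $g|_A\simeq f$. Since $A$ is closed in the metrizable space $T$ and the target is an ANR, Borsuk's homotopy extension theorem \cite[p.~247]{Mardesic} turns $g$ into a map $g'$ on $T$ with $g'|_A=f$. Thus $K(\Z,n)\in\operatorname{AE}(T)$, and \eqref{eq:cohomological-dimension-definition} gives $\cdim T\le n$.

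\emph{Main obstacle.} The delicate point is Step 2: justifying that $[U,K(\Z,n)]\cong H^n(U)$ in singular cohomology for a nonseparable metrizable ANR $U$. I would argue as follows. $U$ is homotopy equivalent to a CW complex, hence to a simplicial complex (\cite{Dowker}, \cite[p.~245]{Mardesic}). Both sides are homotopy invariant, and on CW complexes Brown representability, or standard obstruction theory, gives the bijection. Naturality with respect to the inclusion $U\into T$ needs checking; it follows by choosing compatible CW models, or more simply by the naturality of the map $[\,\cdot\,,K(\Z,n)]\to H^n(\cdot)$, $h\mapsto h^*\iota$, which is defined for all spaces. Only this natural transformation being bijective on $U$ and on $T$ is used. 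The hypothesis $n\ge1$ ensures that $K(\Z,n)$ is connected, so this representability statement is the standard one.
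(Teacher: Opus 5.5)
Your proposal is correct and follows the paper's proof step for step: you extend $f$ to an open neighborhood $U$ using that $K(\Z,n)$ is an ANR, then use representability on spaces of CW homotopy type and the exact sequence $H^n(T)\to H^n(U)\to H^{n+1}(T,U)=0$ to obtain $g$ with $g|_U\simeq f_U$, and finish with Borsuk's homotopy extension theorem. The extra shrinking of $U$ in Step 1 is unnecessary. Your remark that one only needs the natural map $h\mapsto h^*\iota$ to be bijective on $U$ and $T$ usefully spells out a point the paper leaves implicit.
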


\begin{proof}
Choose a simplicial model $E$ of $K(\Z,n)$. By our convention, $E$ is a metrizable ANR.

Let $A\subset T$ be closed and let $f:A\to E$ be continuous. By the neighborhood-extension characterization of metric ANRs, $f$ extends to a map $f_U:U\to E$ on an open neighborhood $U$ of $A$. Both $T$ and $U$ have CW homotopy type \cite[Theorem~8.1]{Kramer}. The representability of singular cohomology \cite[Theorem~4.57]{Hatcher} identifies their homotopy classes of maps to $E$ with their degree-$n$ integral cohomology. The exact sequence
\[
 H^n(T)\longrightarrow H^n(U)
                  \longrightarrow H^{n+1}(T,U)=0
\]
shows that there is a map $g:T\to E$ with $g|_U\simeq f_U$.

Since $g|_A\simeq f$, Borsuk's homotopy extension theorem \cite[p.~247]{Mardesic} deforms $g$ to a map whose restriction to $A$ is exactly $f$: the domain $T$ is metrizable, $A$ is closed, and the target $E$ is an ANR. Thus $E\in\operatorname{AE}(T)$, as required.
\end{proof}

\subsection{The homological criterion}\label{subsec:homological-criterion}
The universal coefficient theorem now isolates the possible one-dimensional gap and the role of top-dimensional freeness.

\begin{proposition}[Homological dimension criterion]\label{prop:dimension-criterion}
Let $T$ be a metrizable ANR and let $n\ge1$. Suppose that
\[
                    H_q(T,U)=0\qquad(q>n)
\]
for every open subset $U\subset T$. Then
\begin{equation}\label{eq:dimension-gap}
                     \dim T=\cdim T\le n+1.
\end{equation}
If, in addition, $H_n(T,U)$ is free abelian for every open $U\subset T$, then
\begin{equation}\label{eq:dimension-criterion-conclusion}
             \cdim T=\dim T\le n.
\end{equation}
\end{proposition}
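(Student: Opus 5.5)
The plan is to apply Lemma~\ref{lem:singular-cohomological-bound} and Theorem~\ref{thm:anr-cohomological-dimension}, using the universal coefficient theorem to pass from the homological hypotheses to the vanishing of singular cohomology of open pairs. The only point needing care is that the pair $(T,U)$ has singular chain complex $C_*(T)/C_*(U)$, a complex of free abelian groups. So the universal coefficient theorem applies and gives, for every $q$, a natural short exact sequence
\[
 0\longrightarrow \Ext\bigl(H_{q-1}(T,U),\Z\bigr)\longrightarrow H^q(T,U)\longrightarrow \Hom\bigl(H_q(T,U),\Z\bigr)\longrightarrow 0 .
\]

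\textbf{The bound $n+1$.} Take $q=n+2$. Both $H_{n+2}(T,U)$ and $H_{n+1}(T,U)$ vanish by hypothesis, so $H^{n+2}(T,U)=0$ for every open $U\subset T$. Lemma~\ref{lem:singular-cohomological-bound}, applied with $n+1\ge 1$ in place of $n$, gives $\cdim T\le n+1$. Theorem~\ref{thm:anr-cohomological-dimension} then gives $\dim T=\cdim T\le n+1$, which is \eqref{eq:dimension-gap}.

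\textbf{The bound $n$ under freeness.} Take $q=n+1$. The $\Hom$ term vanishes because $H_{n+1}(T,U)=0$. The $\Ext$ term is $\Ext(H_n(T,U),\Z)$, which vanishes because $H_n(T,U)$ is free. Hence $H^{n+1}(T,U)=0$ for every open $U$, and Lemma~\ref{lem:singular-cohomological-bound} with the given $n\ge1$ yields $\cdim T\le n$. Theorem~\ref{thm:anr-cohomological-dimension} again converts this to $\dim T\le n$, giving \eqref{eq:dimension-criterion-conclusion}.

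There is no genuine obstacle, since the real work sits in Lemma~\ref{lem:singular-cohomological-bound} and Theorem~\ref{thm:anr-cohomological-dimension}. The step to state carefully is the freeness hypothesis: $\Ext(A,\Z)$ can be nonzero for torsion-free but non-free $A$ (for example $A=\Z^{\mathbb N}$ or $A=\mathbb Q$). So mere torsion-freeness, or freeness of countable subgroups, would not suffice. This is why the full freeness of Theorem~\ref{thm:free} is needed, and why only the gap $\le n+1$ is available without it.
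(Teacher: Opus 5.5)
Your proof is correct and follows the paper's argument essentially verbatim. The universal coefficient sequence for $(T,U)$ gives $H^{n+2}(T,U)=0$ from the vanishing hypothesis, and gives $H^{n+1}(T,U)=0$ once $\Ext(H_n(T,U),\Z)=0$ by freeness; Lemma~\ref{lem:singular-cohomological-bound} and Theorem~\ref{thm:anr-cohomological-dimension} then yield the two bounds, and your closing remark that torsion-freeness alone would not suffice (e.g.\ $\Ext(\Z^{\mathbb N},\Z)\neq0$) is also correct.
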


\begin{proof}
 The universal coefficient theorem \cite[Section~3.1]{Hatcher} gives the exact sequence
\begin{equation}\label{eq:uct}
\begin{split}
 0\longrightarrow
 \Ext^1_{\Z}\bigl(H_{q-1}(T,U),\Z\bigr)
 \longrightarrow H^q(T,U)\\
 \longrightarrow\Hom_{\Z}\bigl(H_q(T,U),\Z\bigr)
 \longrightarrow0.
\end{split}
\end{equation}
The homological vanishing assumption makes both outer terms zero for $q>n+1$. In particular, $H^{n+2}(T,U)=0$ for every open $U$. Lemma~\ref{lem:singular-cohomological-bound}, with $n+1$ in place of $n$, and Theorem~\ref{thm:anr-cohomological-dimension} give~\eqref{eq:dimension-gap}.

If $H_n(T,U)$ is free, then $\Ext^1_{\Z}(H_n(T,U),\Z)=0$.  Thus~\eqref{eq:uct} also gives
\begin{equation}\label{eq:all-coeff-vanishing}
                     H^q(T,U)=0\qquad(q>n)
\end{equation}
for every open $U$. Applying Lemma~\ref{lem:singular-cohomological-bound} in degree $n+1$ yields $\cdim T\le n$. Theorem~\ref{thm:anr-cohomological-dimension} completes the proof.
\end{proof}

\begin{remark*}[Sklyarenko's compactum]
The freeness conclusion of Theorem~\ref{thm:free} does not extend to arbitrary finite-dimensional ANRs. Let $U$ be the mapping telescope
\[
 U=\operatorname{Tel}\bigl(\Sn^1\xrightarrow{\,2\,}\Sn^1
                    \xrightarrow{\,2\,}\Sn^1\xrightarrow{\,2\,}\cdots\bigr),
\]
where each arrow denotes a map of degree two, and let $S=U\cup\{\infty\}$ be its one-point compactification. Sklyarenko's compactum $S$ is a two-dimensional absolute retract whose local group $H_2(S,S\setminus\{\infty\})$ is isomorphic to $\Z[1/2]$ and hence is not free abelian; see \cite[Example~2.7, p.~7]{MelikhovShchepin}.
\end{remark*}

\subsection{Detection of top-dimensional classes by local homology}\label{sec:support}

Let $A$ be a subset of a metric space $T$ and let $\alpha\in H_m(T,A)$. For $x\in T\setminus A$, denote by $\alpha_x$ the image of $\alpha$ under the natural homomorphism
\[
 H_m(T,A)\longrightarrow H_m(T,T\setminus\{x\}).
\]
The \emph{support} of $\alpha$ is the set (cf. \cite[Definition~10]{LytchakStadler} and \cite{Huang})
\[
 \operatorname{spt}(\alpha)
   =\{x\in T\setminus A:\alpha_x\ne0\}.
\]
 It is the intersection of $T\setminus A$ with the carriers of all relative cycles representing $\alpha$. Thus it is closed in $T\setminus A$ and has compact closure in $T$. If $A$ is open, the support itself is compact. The support depends on the homology class, not on a chosen representative.

Let $n\ge1$ and suppose that $H_{n+1}(V,W)=0$ for every open pair $W\subset V\subset T$. Then every nonzero class $\alpha\in H_n(T,A)$ with $A\subset T$ \emph{open} has nonempty support. This follows from the relative argument in \cite[proof of Lemma~A-5, pp.~2342--2343]{Huang}: its Mayer--Vietoris step \cite[proof of Lemma~3-2, p.~2301]{Huang} uses only vanishing in degree $n+1$ and works with integer coefficients, although Huang uses coefficients in $\Z/2\Z$.

For spaces of homological dimension at most $n$, the assertion also follows directly from \cite[Lemma~11]{LytchakStadler}. Here homological dimension at most $n$ means that $H_q(V,W)=0$ for every open pair $W\subset V\subset T$ and every $q>n$. If the support is empty, this lemma gives a representative in every neighborhood of $A$; since $A$ is open, one may take that neighborhood to be $A$ itself, and the class is zero in $H_n(T,A)$. For arbitrary subsets $A$, representation in every neighborhood of $A$ does not imply representation in $A$, so openness cannot simply be omitted from the preceding assertion.

We reformulate this localization statement as follows:

\begin{lemma}\label{lem:local-detection}
Let $T$ be a metric space and $n\ge1$. Suppose
\[
                    H_{n+1}(V,W)=0
\]
for every open pair $W\subset V\subset T$. Then, for every open $U\subset T$, the natural homomorphism
\begin{equation}\label{eq:local-detection}
 E_U:H_n(T,U)\longrightarrow
          \prod_{x\in T\setminus U}H_n(T,T\setminus\{x\})
\end{equation}
is injective.
\end{lemma}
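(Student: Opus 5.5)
The plan is to reduce injectivity of $E_U$ to the support statement proved just above, namely that under the vanishing hypothesis in degree $n+1$ every nonzero class in $H_n(T,A)$ with $A$ open has nonempty support. Since $E_U$ is a homomorphism, it suffices to show that its kernel is trivial. So let $\alpha\in H_n(T,U)$ with $E_U(\alpha)=0$.

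First I will identify the components of $E_U(\alpha)$. For $x\in T\setminus U$ we have $U\subset T\setminus\{x\}$, so there is an inclusion-induced homomorphism $H_n(T,U)\to H_n(T,T\setminus\{x\})$. By definition, the $x$-component of $E_U(\alpha)$ is the image $\alpha_x$ of $\alpha$ under this map. Hence $E_U(\alpha)=0$ says exactly that $\alpha_x=0$ for every $x\in T\setminus U$, that is,
\[
 \operatorname{spt}(\alpha)=\varnothing .
\]

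Second, I will apply the localization result with $A=U$. This is legitimate because $U$ is open and the hypothesis of Lemma~\ref{lem:local-detection} is precisely the vanishing $H_{n+1}(V,W)=0$ for all open pairs, which is the assumption needed in the argument derived from Huang. The result then forces $\alpha=0$, which proves injectivity. Since the lemma is only a reformulation, these two steps complete the proof once the support statement is accepted.

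The main obstacle lies in the support statement itself, if one wants a self-contained proof. My approach would be as follows. Represent $\alpha$ by a relative cycle $c$ with compact carrier $K$. For each $x\in K\setminus U$, the vanishing of $\alpha_x$ yields a chain $b_x$ and an open neighborhood $N_x$ of $x$ such that $c-\partial b_x$ misses $N_x$, modulo chains in $U$. By compactness, finitely many $N_{x_i}$ cover $K\setminus U'$ for a slightly smaller open set $U'$. One would then patch these local null-homologies inductively by a Mayer--Vietoris argument over unions of the $N_{x_i}$. At each step the obstruction to gluing lies in some $H_{n+1}(V,W)$ of an open pair, which vanishes by hypothesis. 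Care is needed to keep every pair open, and to ensure that the final representative actually lies in $U$ rather than merely in a neighborhood of it. The openness of $U$ is what makes this last step work.
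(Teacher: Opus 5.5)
Your argument is the paper's: the lemma is stated there as a reformulation of the preceding localization statement. As you note, $\ker E_U$ consists exactly of the classes $\alpha$ with $\operatorname{spt}(\alpha)=\varnothing$, and these vanish by that statement applied with $A=U$. The paper likewise takes the support statement from Huang's Mayer--Vietoris argument (or Lytchak--Stadler's Lemma~11 with neighborhood $U$) without reproving it, so your closing sketch is optional; if you pursue it, show that $\alpha$ dies in $H_n(T,T\setminus F)$ for finitely many closed pieces $F$ of $T\setminus U$ near $|c|$, each inside some $N_{x_i}$, and for one closed piece away from $|c|$, where $c$ itself witnesses vanishing, then glue by Mayer--Vietoris using $H_{n+1}(T,T\setminus(F\cap F'))=0$.
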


\begin{remark}[Local freeness and countability]\label{rem:local-freeness}
Under the hypotheses of Lemma~\ref{lem:local-detection}, suppose that all local groups $H_n(T,T\setminus\{x\})$ are free abelian. The embedding~\eqref{eq:local-detection} alone does not show that $H_n(T,U)$ is free: the countable direct product $\prod_{k=1}^{\infty}\Z$ is not free abelian~\cite{Baer}.

If $H_n(T,U)$ is countable, then its freeness as a countable subgroup of a direct product of free abelian groups follows from Specker's theorem \cite{Specker}. On the other hand, for a separable metrizable ANR $T$ and every open $U\subset T$, all groups $H_n(T,U)$ are countable, as follows from \cite[Theorem~1, p.~272]{Milnor}.

We are not aware of an $n$-dimensional metrizable ANR $T$ for which all local groups $H_n(T,T\setminus\{x\})$ are free abelian but $H_n(T,U)$ is not free for some open $U\subset T$.
\end{remark}

\section{Reduction}\label{sec:reduction}
Here we reduce Theorem A to Theorem B and the latter to its local version.

\begin{proposition}\label{prop:reduction}
Theorem~\ref{thm:free} implies Theorem~\ref{thm:main}. 
\end{proposition}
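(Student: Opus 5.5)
We need $\dim X=\gdim X$ for a CBA space $X$, assuming Theorem~\ref{thm:free}. Since every compact subset has covering dimension at most $\dim X$ by monotonicity, \eqref{eq:compact-dimension} gives $\gdim X\le\dim X$. So it remains to show $\dim X\le\gdim X$, and we may assume $n:=\gdim X<\infty$, since otherwise there is nothing to prove.

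The plan is to apply Proposition~\ref{prop:dimension-criterion} with $T=X$. Every open subset of a CBA space is an ANR \cite[Theorem~3.2]{Kramer}; in particular, $X$ itself is a metrizable ANR.

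First consider $n\ge1$. By \eqref{eq:open-vanishing}, with $V=X$, we have $H_q(X,U)=0$ for all $q>n$ and every open $U\subset X$. Theorem~\ref{thm:free}, applied to the open pair $U\subset X$, shows that $H_n(X,U)$ is free abelian. Proposition~\ref{prop:dimension-criterion} then yields $\dim X=\cdim X\le n$, which is the required bound.

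The low-dimensional cases need separate treatment, because Proposition~\ref{prop:dimension-criterion} assumes $n\ge1$.
\begin{itemize}
\item If $n=-1$, then $X=\varnothing$ and both dimensions equal $-1$.
\item If $n=0$, the cleanest route is geometric rather than homological. Every point $x$ has a $\CAT(\kappa)$ neighborhood, and $\gdim X=0$ means there are no nontrivial geodesics through $x$. Indeed, a nondegenerate geodesic segment would be an embedded arc, which is a compact set of covering dimension $1$, contradicting \eqref{eq:compact-dimension}.
\item Points of a $\CAT(\kappa)$ ball at distance less than $D_\kappa$ are joined by geodesics. Hence each point has a neighborhood containing no other point, so $X$ is discrete and $\dim X=0$.
\end{itemize}

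Alternatively, one could run the $n=0$ case homologically. This would use the fact that $H_1(X,U)=0$ for all open $U$ and $H_0(X,U)$ is free, and then invoke Lemma~\ref{lem:singular-cohomological-bound} together with the $n=0$ case of Theorem~\ref{thm:anr-cohomological-dimension}. However, that lemma is only stated for $n\ge1$, so the discreteness argument above is preferable.

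The main obstacle is not conceptual. It is only to check that the hypotheses of Proposition~\ref{prop:dimension-criterion} are stated for open $U$ in the whole space $T$, which is exactly what \eqref{eq:open-vanishing} and Theorem~\ref{thm:free} deliver with $V=X$. Beyond that, one only has to handle the degenerate dimensions $n\le0$ correctly.
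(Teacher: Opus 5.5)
Your proposal is correct and follows the paper's own proof: you get $\gdim X\le\dim X$ from \eqref{eq:compact-dimension}, and for $1\le n<\infty$ you apply Proposition~\ref{prop:dimension-criterion} to $T=X$, using \eqref{eq:open-vanishing} and Theorem~\ref{thm:free} with $V=X$. Your discreteness argument for $n=0$ (a nondegenerate geodesic would be a compact arc of dimension one) is valid, and it simply spells out a case that the paper dismisses as obvious.
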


\begin{proof}
Kleiner's compact-subset characterization~\eqref{eq:compact-dimension} gives $\gdim X\le\dim X$.  The cases $\gdim X=-1,0,\infty$ are obvious.

Suppose $1\le n=\gdim X<\infty$. The space $X$ is a metrizable ANR. For every open $U\subset X$, equation~\eqref{eq:open-vanishing} gives $H_q(X,U)=0$ for $q>n$, and Theorem~\ref{thm:free} gives freeness of $H_n(X,U)$. Proposition~\ref{prop:dimension-criterion} therefore yields
\[
                      \dim X=\cdim X\le n.
\]
Together with the opposite inequality, this proves the assertion. 
\end{proof}

The local-to-global principle follows from paracompactness:

\begin{proposition}\label{prop:local-to-global}
Fix an integer $n\ge0$. If the assertion of Theorem~\ref{thm:free} in degree $n$ holds for every $\CAT(1)$ space, then it holds for every CBA space.
\end{proposition}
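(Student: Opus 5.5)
The plan is to reduce to small $\CAT(1)$ pieces by a transfinite filtration of $H_n(V,U)$. The filtration quotients will embed into top-dimensional relative homology groups of open pairs in $\CAT(1)$ spaces. We then invoke the standard fact that an abelian group with a continuous well-ordered filtration whose successive quotients are free is itself free: choose lifts of bases of the quotients, and their union is a basis.

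\emph{Step 1 (local pieces).} For each $x\in V$, choose a closed ball $\bar B$ around $x$, contained in $V$, that is $\CAT(\kappa)$. After rescaling, $\bar B$ is a $\CAT(1)$ space $Y_x$; rescaling does not change topology or homology. Every compact subset of $Y_x$ is compact in $X$, so \eqref{eq:compact-dimension} gives $\gdim Y_x\le\gdim X\le n$. Let $B_x$ be the open ball with the same centre and radius; it is open in $Y_x$. Well-order these balls as $(B_\beta)_{\beta<\lambda}$. Put
\[
U_\alpha=U\cup\bigcup_{\beta<\alpha}B_\beta ,
\]
so each $U_\alpha$ is open, and $U_\lambda=V$ because $U\subset V$ and the balls cover $V$.

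\emph{Step 2 (filtration).} Define
\[
K_\alpha=\ker\bigl(H_n(V,U)\to H_n(V,U_\alpha)\bigr).
\]
This is increasing in $\alpha$, and $K_\lambda=H_n(V,U)$. It is continuous at limit ordinals: if a class dies in $H_n(V,U_\mu)$ for a limit $\mu$, then the chain exhibiting this has compact image. That image is covered by $U$ and finitely many of the $B_\beta$ with $\beta<\mu$, so the class already dies at an earlier stage. The same compactness argument shows $K_0=0$ when $U_0=U$.

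\emph{Step 3 (quotients).} The map $K_{\alpha+1}/K_\alpha\to H_n(V,U_\alpha)$ is injective. Its image lies in $\ker\bigl(H_n(V,U_\alpha)\to H_n(V,U_{\alpha+1})\bigr)$. By the exact sequence of the triple $(V,U_{\alpha+1},U_\alpha)$, this kernel is the image of $H_n(U_{\alpha+1},U_\alpha)$. Note that
\[
H_{n+1}(V,U_{\alpha+1})=0
\]
by \eqref{eq:open-vanishing}, so $H_n(U_{\alpha+1},U_\alpha)$ in fact injects and the image is isomorphic to it. Since $U_{\alpha+1}=U_\alpha\cup B_\alpha$ is a union of two open sets, excision in its open-cover (small chains) form gives
\[
H_n(U_{\alpha+1},U_\alpha)\cong H_n(B_\alpha,B_\alpha\cap U_\alpha).
\]
This is $H_n$ of an open pair in the $\CAT(1)$ space $Y_\alpha$ of geometric dimension at most $n$. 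By hypothesis it is free, and a subgroup of a free abelian group is free. Hence every quotient $K_{\alpha+1}/K_\alpha$ is free, and so is $H_n(V,U)$.

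\emph{Main obstacle.} I expect the delicate points to be the limit-stage continuity in Step 2 and the use of excision for open pairs without any closure condition in Step 3. Both rest on compact supports of singular chains and on subdivision into small chains, and I would check them first. A secondary point is to confirm that the closed ball is a complete $\CAT(1)$ space whose geometric dimension is controlled by \eqref{eq:compact-dimension}, even though the boundary points of the ball may have smaller spaces of directions.
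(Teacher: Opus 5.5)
Your proof is correct, and it takes a genuinely different route from the paper.

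\textbf{The paper's route.} It chooses a locally finite closed shrinking $F_i\subset O_i\subset\operatorname{int}_V C_i$, using paracompactness, and sets $A_i=U\cup(V\setminus F_i)$. It then maps $H_n(V,U)$ into $\bigoplus_i H_n(V,A_i)$. Compact carriers together with local finiteness put the image in the direct sum. Injectivity comes from Lemma~\ref{lem:local-detection}, that is, from the support argument of Huang and Lytchak--Stadler.

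\textbf{Your route.} You use an arbitrary, not locally finite, well-ordered cover by small balls. The vanishing $H_{n+1}(V,U_{\alpha+1})=0$ from \eqref{eq:open-vanishing} enters only through the exact sequence of the triple. That sequence identifies $K_{\alpha+1}/K_\alpha$ with a subgroup of $H_n(B_\alpha,B_\alpha\cap U_\alpha)$. Compactness gives continuity at limit stages, and the standard splitting lemma for continuous filtrations with free factors finishes the argument.

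\textbf{Comparison.} Your version dispenses with paracompactness and with the local-detection lemma altogether. It is therefore more self-contained and, like Remark~\ref{rem:anr-local-to-global}, visibly curvature-free. It also handles $n=0$ uniformly. The paper's version produces a single natural embedding into a direct sum of local groups, so it needs only ``subgroups of free abelian groups are free'' and no transfinite bookkeeping. Since the paper needs Lemma~\ref{lem:local-detection} anyway for the injectivity of $\mathcal D_U$, that lemma costs it nothing extra.

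\textbf{Minor points.}
\begin{itemize}
\item In the limit step, the carrier you must cover by finitely many sets is that of the $n$-chain $z-\partial w\in C_n(U_\mu)$, not that of $w$.
\item $K_0=0$ is immediate from $U_0=U$ and needs no compactness.
\item A $\CAT(\kappa)$ closed ball inside $V$ exists because closed balls of radius $<D_\kappa/2$ in the given $\CAT(\kappa)$ neighborhood are convex.
\end{itemize}
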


\begin{proof}
For $n=0$, $H_0(V,U)$ is free on the path components of $V$ disjoint from $U$. 

Let $n\ge1$, let $X$ be a CBA space with $\gdim X\le n$, and fix an open pair $U\subset V\subset X$. We may assume $V\ne\varnothing$. Every point of $V$ has a $\CAT(\kappa)$ neighborhood contained in $V$.
The bound $\kappa$ may vary from one neighborhood to another.

By paracompactness of metric spaces and the closed-shrinking theorem (see \cite[Theorem~41.4 and Lemma~41.6]{Munkres}), we find a locally finite family $(F_i)_{i\in I}$ of subsets closed in $V$, open sets $O_i\subset V$, and $\CAT(\kappa_i)$ neighborhoods $C_i\subset V$ such that
\begin{equation}\label{eq:local-closed-cover}
              V=\bigcup_{i\in I}F_i,
              \qquad F_i\subset O_i\subset\operatorname{int}_V C_i.
\end{equation}
In particular, $O_i$ is open in $C_i$.   

The inequality $\gdim C_i\le\gdim X\le n$ follows directly from~\eqref{eq:compact-dimension}. The $\CAT(\kappa_i)$ space $C_i$ becomes a $\CAT(1)$ space after rescaling, without changing its geometric dimension. Hence the validity of Theorem B for $\CAT(1)$ spaces implies that $H_n(U_i,W_i)$ is free for every pair of open subsets $W_i\subset U_i$ of $C_i$.

For each $i$, set $A_i=U\cup(V\setminus F_i)$, which is open in $V$. Since $O_i\cup A_i=V$, excision \cite[Theorem~2.20]{Hatcher} gives
\begin{equation}\label{eq:local-relative-groups}
 \mathcal H_i:=H_n(V,A_i)
       \cong H_n(O_i,O_i\cap A_i).
\end{equation}
The pair on the right is an open pair in $C_i$. Thus $\mathcal H_i$ is free abelian by the assumed $\CAT(1)$ case of Theorem~\ref{thm:free}.

The maps of pairs $(V,U)\to(V,A_i)$ define a homomorphism
\begin{equation}\label{eq:local-to-global-map}
 \mathcal R:H_n(V,U)\longrightarrow\bigoplus_{i\in I}\mathcal H_i.
\end{equation}
To check that its values lie in the direct sum (a priori, they lie in the direct product), represent a class $\alpha$ by a finite relative cycle $z$. Its carrier $|z|$, the union of the images of its simplices, is compact in $V$ and meets only finitely many $F_i$. For every other $i$, the chain $z$ is contained in $V\setminus F_i\subset A_i$, so its image in $\mathcal H_i$ is zero.

The homomorphism $\mathcal R$ is injective. If $\mathcal R(\alpha)=0$, choose, for any $x\in V\setminus U$, an index $i$ with $x\in F_i$. Then $A_i\subset V\setminus\{x\}$, and the local image of $\alpha$ factors through $\mathcal H_i$. Hence every local image is zero. Equation~\eqref{eq:open-vanishing} and Lemma~\ref{lem:local-detection}, applied to $(V,U)$, give $\alpha=0$.

The direct sum in~\eqref{eq:local-to-global-map} is free abelian. Since subgroups of free abelian groups are free, $H_n(V,U)$ is free as well.
\end{proof}

\begin{remark}[Local-to-global for ANRs]\label{rem:anr-local-to-global}
The proof of Proposition~\ref{prop:local-to-global} applies without change to metrizable ANRs, with the same homological vanishing assumption. More precisely, let $T$ be a metrizable ANR and let $n\ge1$. Assume that $H_{n+1}(V,W)=0$ for every open pair $W\subset V\subset T$. If every point of $T$ has an open neighborhood $O$ such that $H_n(V,W)$ is free abelian for every open pair $W\subset V\subset O$, then $H_n(V,U)$ is free abelian for every open pair $U\subset V\subset T$. The argument uses only paracompactness, excision, compact carriers, and Lemma~\ref{lem:local-detection}; no curvature bound is involved. The case $n=0$ is immediate.
\end{remark}

\section{Induction}\label{sec:induction}

By Proposition~\ref{prop:local-to-global}, it remains to prove Theorem~\ref{thm:free} for $\CAT(1)$ spaces. We introduce a size of a homology class that incorporates the subdivision scale and prove that it can be controlled under localization.

\subsection{Size of a class}\label{subsec:size}

A singular $m$-chain $z=\sum_\sigma a_\sigma\sigma$ is always finite, and we combine equal simplices so that the displayed simplex maps are distinct. Its \emph{carrier}, its $\ell^1$-norm, and its mesh are
\[
 |z|=\bigcup_{a_\sigma\ne0}\sigma(\Delta^m),\qquad
 \|z\|_1=\sum_\sigma|a_\sigma|,\qquad
 \mesh z=\max_{a_\sigma\ne0}\operatorname{diam}\sigma(\Delta^m).
\]
For the zero chain, the carrier is empty and $\|z\|_1=\mesh z=0$. The carrier is compact and should be distinguished from the support of the homology class. We call a chain $\delta$-small if its mesh is at most $\delta$.

Let $Y$ be a metric space and $U\subset Y$ be open. For a chain $z\in C_m(Y)$ with $\partial z\in C_{m-1}(U)$, equivalently $|\partial z|\subset U$, define its relative control $\rho_U(z)\in(0,1]$ by
\begin{equation}\label{eq:boundary-clearance}
\rho_U(z)=\min\{1,d\bigl(|\partial z|,Y\setminus U\bigr)\}.
\end{equation}
We use the convention that the distance
$d\bigl(|\partial z|,Y\setminus U\bigr)$ is $\infty$ if either subset is empty.
In particular, $\rho_U(z)=1$ for an absolute cycle.

Put $\delta_m=32^{-m}$ for $m\ge0$. We call such a chain $z$ \emph{admissible} if
\begin{equation}\label{eq:admissible-chain}
                      \mesh z\le\delta_m \cdot \rho_U(z).
\end{equation}
Thus the required mesh decreases as the boundary approaches $Y\setminus U$.

\begin{definition}\label{def:size}
For $\alpha\in H_m(Y,U)$, define
\begin{equation}\label{eq:class-size}
 \Size_m(\alpha;Y,U)
   =\inf\bigl\{\|z\|_1:
           z\text{ is an admissible chain representing }\alpha\bigr\}.
\end{equation}
The infimum is taken over chains $z\in C_m(Y)$ with $\partial z\in C_{m-1}(U)$ whose relative homology class is $\alpha$.
For absolute homology, we write
\[
 \Size_m(\alpha;Y)=\Size_m(\alpha;Y,\varnothing).
\]
\end{definition}

\begin{lemma}\label{lem:finite-size}
Every homology class has finite size, and the infimum in~\eqref{eq:class-size} is attained. The zero class has size zero, and every nonzero class has size at least one.
\end{lemma}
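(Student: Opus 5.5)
The proof splits into three parts: existence of an admissible representative via barycentric subdivision, attainment from integrality of the $\ell^1$-norm, and the two size statements from the convention for the zero chain.

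\emph{Existence.} Let $\alpha\in H_m(Y,U)$ be represented by a relative cycle $z$, so $\partial z\in C_{m-1}(U)$. We may assume $z\neq0$; if $\alpha=0$, the zero chain already represents it. The set $|\partial z|$ is compact and contained in the open set $U$. Hence $d(|\partial z|,Y\setminus U)>0$ (it is $\infty$ if either set is empty), and $\rho_U(z)>0$.

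We apply iterated barycentric subdivision $\sd^k$. It is a natural chain map, chain homotopic to the identity, and it preserves chains in $U$. So $\sd^k z$ represents the same relative class $\alpha$. Its boundary $\partial\,\sd^k z=\sd^k\partial z$ has carrier contained in $|\partial z|$, because each simplex of $\sd^k\tau$ has image inside the image of $\tau$. Therefore $\rho_U(\sd^k z)\ge\rho_U(z)$.

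We also need $\mesh\sd^k z\to0$. This does not follow from a metric diameter estimate, since the singular simplices are arbitrary continuous maps. Instead we use compactness: each $\sigma$ is uniformly continuous on $\Delta^m$, and the affine simplices of $\sd^k\Delta^m$ have diameter at most $(m/(m+1))^k\operatorname{diam}\Delta^m$. Hence $\mesh\sd^k z\to0$ (cf.\ the proof of excision in \cite{Hatcher}). Choosing $k$ with $\mesh\sd^k z\le\delta_m\rho_U(z)\le\delta_m\rho_U(\sd^k z)$ gives an admissible representative. Thus the set in~\eqref{eq:class-size} is nonempty and the size is finite.

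\emph{Attainment.} For every chain, $\|z\|_1$ is a nonnegative integer. The infimum of a nonempty set of nonnegative integers is a minimum, so it is attained.

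\emph{Zero and nonzero classes.} The zero chain has $|\partial 0|=\varnothing$, so $\rho_U(0)=1$ and $\mesh 0=0$. It is therefore admissible and represents $0$, so the zero class has size $0$. Conversely, if $\alpha\neq0$, then every representative is nonzero, since the zero chain represents only the zero class. A nonzero chain has $\|z\|_1\ge1$ because its coefficients are integers. Hence $\Size_m(\alpha;Y,U)\ge1$.

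The only step that needs care is the mesh estimate in the existence part: showing that subdivision drives the mesh to zero without increasing the boundary carrier, so that the admissibility threshold $\delta_m\rho_U(z)$ does not shrink while the mesh does. Everything else follows from the integrality of the coefficients.
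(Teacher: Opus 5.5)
Your proof is correct and follows essentially the same route as the paper. Both arguments subdivide a representative, use $|\partial\sd^k z|\subset|\partial z|$ so that $\rho_U$ does not decrease while $\mesh\sd^k z\to0$, and conclude from the integrality of the $\ell^1$-norm. You also justify $\mesh\sd^k z\to0$ via uniform continuity, which the paper leaves implicit, and you rightly drop the paper's bound $\|\sd^k z_0\|_1\le((m+1)!)^k\|z_0\|_1$, which this lemma does not need.
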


\begin{proof}
Choose a chain $z_0\in C_m(Y)$ with $\partial z_0\in C_{m-1}(U)$ representing $\alpha$. Barycentric subdivision preserves its relative homology class and satisfies
\begin{equation}\label{eq:subdivision}
 |\partial\sd^k z_0|\subset|\partial z_0|,
 \qquad \|\sd^k z_0\|_1\le((m+1)!)^k\|z_0\|_1;
\end{equation}
see \cite[Section~2.1]{Hatcher}. Thus $\rho_U(\sd^kz_0)\ge\rho_U(z_0)>0$. Since
\[
 \lim_{k\to\infty}\mesh(\sd^kz_0)=0,
\]
the chain $\sd^kz_0$ is admissible for all sufficiently large $k$. The set of norms of admissible chains representing $\alpha$ is therefore a nonempty set of nonnegative integers, hence has a minimum. Only the zero chain has norm zero.
\end{proof}

\subsection{Inductive step}\label{subsec:inductive-step}

Let $Y$ be a $\CAT(1)$ space and let $m\ge1$. For $U\subset Y$ open and $x\in Y\setminus U$, compose the natural map
$H_m(Y,U)\to H_m(Y,Y\setminus\{x\})$ with the isomorphism
$\ell_x:H_m(Y,Y\setminus\{x\})\to\redH_{m-1}(\Sigma_xY)$ from~\eqref{eq:local-link} and the canonical inclusion
$\redH_{m-1}(\Sigma_xY)\into H_{m-1}(\Sigma_xY)$.
This gives the \emph{directional homomorphism}
\begin{equation}\label{eq:directional-map}
 \mathcal D_x:H_m(Y,U)\longrightarrow H_{m-1}(\Sigma_xY).
\end{equation}
The map $\mathcal D_x$ vanishes on $\alpha$ precisely when $x\notin\operatorname{spt}(\alpha)$. If $\alpha$ is represented by a chain $b$ in $B_\pi(x)$ with $|\partial b|\subset U$, then $\mathcal D_x\alpha$ is represented by $(\log_x)_\#\partial b$.

If $\gdim Y\le m$, Lemma~\ref{lem:local-detection}, \eqref{eq:local-link}, and \eqref{eq:open-vanishing} show that the map
\[
 \mathcal D_U:H_m(Y,U)\longrightarrow
       \prod_{x\in Y\setminus U}H_{m-1}(\Sigma_xY)
\]
with coordinates $\mathcal D_x$ is injective.

\begin{proposition}[Size under passage to directions]\label{prop:directional-size}
Let $Y$ be a $\CAT(1)$ space, let $U\subset Y$ be open, and let $m\ge1$. For every $\alpha\in H_m(Y,U)$ and every $x\in Y\setminus U$,
\begin{equation}\label{eq:directional-size}
 \Size_{m-1}(\mathcal D_x\alpha;\Sigma_xY)
                  \le(m+1)\Size_m(\alpha;Y,U).
\end{equation}
\end{proposition}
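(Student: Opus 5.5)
The plan is to take a minimal admissible representative $z$ of $\alpha$, cut out the simplices of $z$ near $x$, and push the boundary of this piece into $\Sigma_xY$ by $\log_x$. The resulting cycle should have $\ell^1$-norm at most $(m+1)\|z\|_1$ and mesh at most $\delta_{m-1}$. Since $\Sigma_xY$ carries absolute homology, $\rho=1$, so admissibility there only requires mesh at most $\delta_{m-1}=32\,\delta_m$.

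By Lemma~\ref{lem:finite-size}, choose an admissible chain $z$ representing $\alpha$ with $\|z\|_1=\Size_m(\alpha;Y,U)$. Put $\rho=\rho_U(z)\in(0,1]$ and $\delta=\mesh z\le\delta_m\rho$. Since $x\notin U$, we have $d(x,|\partial z|)\ge\rho$. Set $r=\rho/8$, so that $r\le1/8$. Since $m\ge1$, $\delta\le\rho/32\le r/2$. Let $b$ be the subchain of $z$ consisting of the terms $a_\sigma\sigma$ whose image meets the closed ball $\bar B_r(x)$. Then:
\begin{enumerate}
\item Every simplex of $b$ lies in $\bar B_{r+\delta}(x)$, hence in $B_\pi(x)$, because $r+\delta<\rho\le1$.
\item Every simplex of $z-b$ avoids $\bar B_r(x)$, in particular $x$.
\end{enumerate}

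The key combinatorial step is to locate $|\partial b|$. A face occurring in $\partial b$ with nonzero coefficient either occurs in $\partial z$, or is a face of a simplex of $z-b$, because otherwise its contributions would cancel exactly as they do in $\partial z$.
\begin{enumerate}
\item The first alternative is impossible: faces in $|\partial z|$ lie at distance at least $\rho>r+\delta$ from $x$, while faces of simplices of $b$ lie within $r+\delta$.
\item Hence every face of $\partial b$ lies outside $\bar B_r(x)$ and inside $\bar B_{r+\delta}(x)$, that is, $|\partial b|\subset A_{r,r+\delta}(x)$, where $d(\cdot,x)\ge r$ holds on these compact carriers.
\end{enumerate}
In particular $|\partial b|\subset Y\setminus\{x\}$. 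Since $|z-b|\subset Y\setminus\{x\}$, the chain $b$ represents the image $\alpha_x$ of $\alpha$ in $H_m(Y,Y\setminus\{x\})$. By the description of $\ell_x$ after \eqref{eq:local-link}, $\mathcal D_x\alpha$ is represented by the cycle $c=(\log_x)_\#\partial b$; the maps $\log_x\circ\tau$ are continuous because the faces $\tau$ avoid $x$.

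Finally we estimate $c$.
\begin{enumerate}
\item Norm: $\|\partial b\|_1\le(m+1)\|b\|_1\le(m+1)\|z\|_1$. Combining equal simplices after applying $\log_x$ can only decrease the norm, so $\|c\|_1\le(m+1)\Size_m(\alpha;Y,U)$.
\item Mesh: each face $\tau$ of $\partial b$ has image $E\subset A_{r,r+\delta}(x)$ of diameter at most $\delta$. By \eqref{eq:log-small-sets}, $\operatorname{diam}\log_x E\le4\delta/r=32\delta/\rho\le32\,\delta_m=\delta_{m-1}$.
\end{enumerate}
Thus $c$ is admissible in $\Sigma_xY$ and represents $\mathcal D_x\alpha$, giving \eqref{eq:directional-size}. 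For $m=1$, $c$ is a $0$-cycle of augmentation zero and represents the reduced class, so nothing changes.

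The step I expect to need the most care is this localization of $\partial b$ into the thin annulus. It is what forces the choice $r\sim\rho$ together with the admissibility condition $\delta\le\delta_m\rho$, and the factor $32$ in $\delta_m=32^{-m}$ is exactly what absorbs the Lipschitz constant $4/r=32/\rho$.
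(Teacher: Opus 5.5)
Your proposal is correct and is essentially the paper's proof. It uses the same minimal admissible representative, the same cut at radius $r=\rho/8$, the same face-cancellation argument placing $|\partial b|$ in the thin annulus, the bound $\|\partial b\|_1\le(m+1)\|z\|_1$, and the mesh estimate $4\delta/r\le\delta_{m-1}$ from Lemma~\ref{lem:log-lipschitz}. The only differences are cosmetic: you use a closed ball instead of an open one, and $\delta=\mesh z$ instead of $\delta_m\rho$; since \eqref{eq:log-small-sets} is stated for $\delta>0$, the degenerate case $\mesh z=0$ is handled by taking $\delta=\delta_m\rho$ as the paper does.
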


\begin{proof}
Fix $x\in Y\setminus U$. By Lemma~\ref{lem:finite-size}, choose an admissible representative $z$ of $\alpha$ with $\|z\|_1=\Size_m(\alpha;Y,U)$. Set $\rho=\rho_U(z)\in(0,1]$. By admissibility, $\mesh z\le\delta$, where
\[
 \delta:=\delta_m\rho\le\rho/32.
\]
Choose the auxiliary radius
\[
 r:=\rho/8\in(0,1/8],\qquad r\ge4\delta.
\]
By the definition of $\rho$, either $\partial z=0$ as a chain in $C_{m-1}(U)$ or $d(x,|\partial z|)\ge\rho$. Thus the carrier of $\partial z$ avoids $B_\rho(x)$. Since $r+\delta\le5\rho/32<\rho$, we have
\[
 |\partial z|\cap B_{r+\delta}(x)=\varnothing.
\]

Let $z_{x,r}$ be the subchain of $z$ consisting of those terms whose simplex images meet $B_r(x)$. Since $\mesh z\le\delta$, each of these images is contained in $B_{r+\delta}(x)$. Hence
\[
 |z_{x,r}|\subset B_{r+\delta}(x),\qquad
 |z-z_{x,r}|\cap B_r(x)=\varnothing,
 \qquad \|z_{x,r}\|_1\le\|z\|_1.
\]
In the decomposition $z=z_{x,r}+(z-z_{x,r})$, the images of simplices appearing in different summands can meet only in the annulus
\[
 \mathcal A_{r,r+\delta}(x):=\{y\in Y:r\le d(x,y)<r+\delta\}.
\]
Every face of a simplex in $z-z_{x,r}$ avoids $B_r(x)$. Therefore, any singular $(m-1)$-simplex whose image meets $B_r(x)$ has the same coefficient in $\partial z_{x,r}$ as in $\partial z$, namely zero. Since the boundary of $z_{x,r}$ is also contained in $B_{r+\delta}(x)$, it follows that
\[
 |\partial z_{x,r}|\subset\mathcal A_{r,r+\delta}(x).
\]
In particular, $\partial z_{x,r}$ avoids $x$, so $z_{x,r}$ represents a local homology class in $H_m(Y,Y\setminus\{x\})$.

The chain $z-z_{x,r}$ is contained in $Y\setminus\{x\}$ and is therefore zero in the relative chain group $C_m(Y,Y\setminus\{x\})$. Thus $z_{x,r}$ and $z$ represent the same local homology class $\alpha_x$.

By~\eqref{eq:local-link}, the class $\mathcal D_x\alpha\in H_{m-1}(\Sigma_xY)$ is represented by
\[
 b_x:=(\log_x)_\#\partial z_{x,r}\in C_{m-1}(\Sigma_xY).
\]
Since $z_{x,r}$ is a subchain of $z$, and every face has image contained in that of its simplex,
\[
 \mesh z_{x,r}\le\delta,\qquad \mesh(\partial z_{x,r})\le\delta.
\]
Pushforward is nonexpanding for the $\ell^1$-norm, and an $m$-simplex has $m+1$ faces. Hence
\[
 \|b_x\|_1\le\|\partial z_{x,r}\|_1
 \le(m+1)\|z_{x,r}\|_1
 \le(m+1)\|z\|_1
 =(m+1)\Size_m(\alpha;Y,U).
\]
To prove~\eqref{eq:directional-size}, it remains to check that $b_x$ is admissible.

Since $0<r\le1/8$ and $r+\delta\le5/32<1/2$, Lemma~\ref{lem:log-lipschitz} applies to the annulus $\mathcal A_{r,r+\delta}(x)$ and gives
\[
 \mesh b_x
 \le\frac{4}{r}\mesh(\partial z_{x,r})
 \le\frac{4\delta}{r}
 =32\delta_m=\delta_{m-1}.
\]
As an absolute cycle, $b_x$ has relative control one and is therefore admissible. Together with the preceding norm estimate, this proves~\eqref{eq:directional-size}.
\end{proof}

\subsection{Discrete norms and freeness}\label{subsec:coordinates}

We use a strengthening of N\"obeling's theorem due to Stepr\={a}ns, which involves the following concept.

 A \emph{homogeneous norm} on an abelian group $A$ is a function $\nu:A\to[0,\infty)$ such that
\[
 \nu(a)=0\ \Longleftrightarrow\ a=0,\qquad
 \nu(a+b)\le\nu(a)+\nu(b),\qquad
 \nu(ka)=|k|\nu(a)\quad(k\in\Z).
\]
It is \emph{discrete} if $\inf_{a\ne0}\nu(a)>0$. The following theorem is due to Stepr\={a}ns \cite{Steprans}; see also \cite[Theorem~3.1]{BraunSauer}.

\begin{stepranstheorem}
An abelian group with a discrete homogeneous norm is free.
\end{stepranstheorem}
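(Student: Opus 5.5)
Since the paper cites this result from \cite{Steprans} (see also \cite{BraunSauer}) rather than proving it, what follows is only a sketch of how I would reconstruct a proof. The countable case is fully planned; the uncountable case is not, and its key step is left open.

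Let $A$ be an abelian group with a discrete homogeneous norm $\nu$, and put $c=\inf_{a\ne0}\nu(a)>0$. Homogeneity shows that $A$ is torsion-free, since $\nu(ka)=|k|\nu(a)>0$ for $a\ne0$ and $k\ne0$. The plan is to prove freeness first for countable $A$ and then to pass to arbitrary cardinalities by a filtration argument. The countable case will rest on Pontryagin's criterion: a countable torsion-free group is free if every finite-rank subgroup is finitely generated.

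\emph{Finite-rank subgroups.} Let $B\subset A$ have finite rank $r$, and regard $B\subset V_{\mathbb Q}=B\otimes\mathbb Q\cong\mathbb Q^r$.
\begin{enumerate}
\item Extend $\nu$ to $V_{\mathbb Q}$ by $\nu(b/k)=\nu(b)/k$. Homogeneity makes this well defined, and the result is $\mathbb Q$-homogeneous and subadditive.
\item Writing vectors in a basis $e_1,\dots,e_r\in B$, subadditivity gives $\nu(v)\le\sum|v_i|\,\nu(e_i)$. So $\nu$ is Lipschitz and extends to a seminorm on $\R^r$.
\item Let $W$ be the null space of this seminorm. It induces a genuine norm on $\R^r/W$.
\item The map $B\to\R^r/W$ is injective, because $\nu$ vanishes only at $0$ on $B$. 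Its image is discrete, because every nonzero element has norm at least $c$.
\item A discrete subgroup of a finite-dimensional normed space is finitely generated free. Hence $B$ is finitely generated, Pontryagin's criterion applies, and countable groups with a discrete homogeneous norm are free.
\end{enumerate}

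\emph{Uncountable groups.} I would argue by induction on $|A|$ and aim for a continuous filtration $A=\bigcup_{\alpha<\kappa}A_\alpha$ by subgroups with $|A_\alpha|<|A|$, such that each quotient $A_{\alpha+1}/A_\alpha$ is free. I would build it from elementary submodels: fix a continuous chain of elementary submodels $M_\alpha$ of some $H(\theta)$ containing $A$ and $\nu$, and set $A_\alpha=A\cap M_\alpha$. Elementarity gives purity of each $A_\alpha$. The crucial additional property I would try to establish is that each quotient $A_{\alpha+1}/A_\alpha$ again carries a discrete homogeneous norm, or at least that all its small subgroups do. The natural candidate is the quotient norm
\[
\bar\nu(a+A_\alpha)=\inf_{b\in A_\alpha}\nu(a+b).
\]
It is homogeneous only up to the issue that $\inf_b\nu(ka+b)$ may be less than $|k|\inf_b\nu(a+b)$. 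So I would replace it by the stabilized version $\lim_k\bar\nu(ka)/k$, which is homogeneous and subadditive. Once the successive quotients are known to be free, the standard Eklof--Hill lemma makes $A$ free. At singular cardinals, Shelah's singular compactness theorem handles the induction, since freeness of all smaller-cardinality subgroups is exactly what the induction supplies.

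\emph{Main obstacle.} The hard step is discreteness of the stabilized quotient norm. Concretely, I must exclude elements $a\notin A_\alpha$ that are approximated by $A_\alpha$ arbitrarily well in norm along all multiples, that is, with $\inf_{b\in A_\alpha}\nu(ka+b)=o(k)$. My first attempt would be to use the finite-rank argument above inside $M_\alpha$. A witness sequence $b_k\in A_\alpha$ with $\nu(ka+b_k)$ small should, by elementarity, be reflected into $M_\alpha$ together with enough of $a$'s behaviour. Combined with discreteness in the finite-dimensional normed quotient, this should force $a$ into $A_\alpha$. It may be necessary to restrict to suitably closed submodels, for instance $\sigma$-closed ones, to make this reflection work; I expect this is where Steprāns's combinatorial set theory is genuinely needed.
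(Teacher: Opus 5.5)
The paper gives no proof of this statement. It quotes it from Stepr\={a}ns (see also Braun--Sauer, Theorem~3.1), so your sketch has to stand on its own. Your countable case is sound: the rational extension, the Lipschitz bound and the normed quotient $\R^r/W$ show that every finite-rank subgroup is a lattice, hence finitely generated, so Pontryagin's criterion applies. But this only proves that $A$ is $\aleph_1$-free, and $\aleph_1$-freeness does not imply freeness: the Baer--Specker group $\prod_{k\ge1}\Z$ is $\aleph_1$-free (Specker) but not free (Baer). So everything beyond the countable case lies in the step you leave open, and as written the proof is incomplete. Note also that excluding $\inf_{b\in A_\alpha}\nu(ka+b)=o(k)$ is not enough. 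You need $\inf_{b\in A_\alpha}\nu(ka+b)\ge c'k$ with $c'>0$ independent of $a\notin A_\alpha$ and of $k\ge1$; otherwise the stabilized quotient norm is positive but not discrete.

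The missing idea is a uniqueness observation. With it your plan goes through, and you need no reflection of witness sequences, no $\sigma$-closed models and no singular compactness.
\begin{enumerate}
\item For $k\ge1$ and $b\in A$, the set $S_{k,b}=\{x\in A:\nu(kx+b)<ck/2\}$ has at most one element. Indeed, $x,y\in S_{k,b}$ gives $k\,\nu(x-y)\le\nu(kx+b)+\nu(ky+b)<ck$, so $\nu(x-y)<c$ and $x=y$.
\item If $A,\nu\in M_\alpha$ and $b\in A_\alpha$, then $S_{k,b}\in M_\alpha$, so its element, if any, lies in $A_\alpha$. Without model theory, use subgroups closed under the partial maps sending $(k,b)$ to the element of $S_{k,b}$; the trivial subgroup $0$ is closed.
\item Hence $\nu(ka+b)\ge ck/2$ whenever $a\notin A_\alpha$, $b\in A_\alpha$ and $k\ge1$. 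So your stabilized quotient norm is at least $c/2$ on every nonzero class of $A/A_\alpha$, and by induction on cardinality each $A_{\alpha+1}/A_\alpha$ is free.
\item Take a continuous chain $(A_\alpha)_{\alpha<|A|}$ of closed subgroups exhausting $A$, with $A_0=0$ and $|A_\alpha|\le|\alpha|+\aleph_0$. The free quotients split, so $A\cong\bigoplus_\alpha A_{\alpha+1}/A_\alpha$, for regular and singular $|A|$ alike.
\end{enumerate}
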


In particular, an integer-valued homogeneous norm implies freeness. For a family $(A_i,\nu_i)_{i\in I}$ with such norms, the \emph{bounded product}
\[
 \prod_{i\in I}^{\mathrm{bd}}(A_i,\nu_i)
   =\left\{(a_i)\in\prod_{i\in I}A_i:
                     \sup_{i\in I}\nu_i(a_i)<\infty\right\}
\]
is free abelian: the supremum is again an integer-valued homogeneous norm. For $A_i=\Z$ and $\nu_i(a)=|a|$, this recovers N\"obeling's bounded-function theorem.

We do not apply this theorem directly to $\Size_m$, which need not satisfy the homogeneity property. Instead, the size is used to control the following genuine norms.

\begin{proposition}[A controlled discrete norm]\label{prop:bounded-coordinates}
Let $Y$ be a $\CAT(1)$ space with $\gdim Y\le m$, where $m\ge0$, and let $U\subset Y$ be open. The group $H_m(Y,U)$ admits an integer-valued homogeneous norm $\nu_m^{Y,U}$ satisfying
\begin{equation}\label{eq:coordinate-size-bound}
 \nu_m^{Y,U}(\alpha)\le(m+1)!\Size_m(\alpha;Y,U).
\end{equation}
In particular, $H_m(Y,U)$ is free abelian.
\end{proposition}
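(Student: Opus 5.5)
We argue by induction on $m$, constructing the norms simultaneously for all $\CAT(1)$ spaces $Y$ with $\gdim Y\le m$ and all open $U\subset Y$.

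\emph{Base case $m=0$.} Here $H_0(Y,U)$ is free on the set $P$ of path components of $Y$ disjoint from $U$. Write a class as $\alpha=\sum_{c\in P}a_c c$ and put
\[
\nu_0^{Y,U}(\alpha)=\sum_{c\in P}|a_c|.
\]
This is an integer-valued homogeneous norm. An admissible representative $z=\sum_\sigma a_\sigma\sigma$ is a finite sum of points, and the coefficient $a_c$ is the sum of the $a_\sigma$ over the points of $z$ lying in $c$. Hence $\nu_0^{Y,U}(\alpha)\le\|z\|_1$. Taking the infimum over admissible representatives gives~\eqref{eq:coordinate-size-bound} with constant $1!=1$.

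\emph{Inductive step, $m\ge1$.} Assume the statement in degree $m-1$. By~\eqref{eq:dimension-drop}, $\gdim\Sigma_xY\le m-1$ for every $x$. The spaces $\Sigma_xY$ are $\CAT(1)$, so the inductive hypothesis with empty open set gives norms $\nu_{m-1}^{\Sigma_xY}$ on $H_{m-1}(\Sigma_xY)$. Define
\[
\nu_m^{Y,U}(\alpha)=\sup_{x\in Y\setminus U}\nu_{m-1}^{\Sigma_xY}(\mathcal D_x\alpha).
\]
By the inductive bound and Proposition~\ref{prop:directional-size}, each term satisfies
\[
\nu_{m-1}^{\Sigma_xY}(\mathcal D_x\alpha)\le m!\,\Size_{m-1}(\mathcal D_x\alpha;\Sigma_xY)\le m!\,(m+1)\Size_m(\alpha;Y,U).
\]
So the supremum is finite and satisfies~\eqref{eq:coordinate-size-bound}. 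It is a supremum of nonnegative integers bounded above, hence an integer (a maximum). The properties of a homogeneous norm now follow one at a time:
\begin{itemize}[label={}]
\item Homogeneity and subadditivity pass to the supremum, because each $\mathcal D_x$ is a homomorphism and each $\nu_{m-1}^{\Sigma_xY}$ is a homogeneous norm.
\item Definiteness uses the injectivity of $\mathcal D_U$ noted before Proposition~\ref{prop:directional-size}, which is where $\gdim Y\le m$ enters through Lemma~\ref{lem:local-detection} and~\eqref{eq:open-vanishing}. If $\nu_m^{Y,U}(\alpha)=0$, then every $\mathcal D_x\alpha=0$, so $\alpha=0$.
\end{itemize}
An integer-valued homogeneous norm is discrete, so Stepr\={a}ns's theorem shows that $H_m(Y,U)$ is free.

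\emph{Main obstacle.} The key point is that the supremum is finite, and this comes entirely from the uniform estimate of Proposition~\ref{prop:directional-size}. Two details need care.
\begin{itemize}[label={}]
\item $\mathcal D_x$ takes values in $H_{m-1}(\Sigma_xY)$, which is the absolute group. This matches the inductive hypothesis applied with $U=\varnothing$.
\item When $m=1$, $\mathcal D_x$ lands in reduced $H_0$, sitting inside $H_0$. The base-case norm is defined on all of $H_0$, so nothing changes.
\end{itemize}
No extra work is needed to make the recursion well defined over all spaces at once. The norm on each $\Sigma_xY$ is fixed by the inductive construction in degree $m-1$, so the definition is canonical.
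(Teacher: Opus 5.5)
Your proof is correct and follows the paper's argument essentially step for step. It uses the same free base case (you prove only $\nu_0^{Y,U}\le\Size_0$, which suffices, where the paper notes equality), the same supremum of the inductive norms over directions bounded uniformly via Proposition~\ref{prop:directional-size}, definiteness from injectivity of $\mathcal D_U$, and Stepr\={a}ns's theorem. The one case the paper treats explicitly and you leave implicit is $U=Y$: there the index set is empty, and one simply sets $\nu_m^{Y,Y}=0$ on $H_m(Y,Y)=0$.
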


\begin{proof}
We construct the norms by induction on $m$. For $m=0$, let $\mathcal C(Y,U)$ be the set of path components of $Y$ that do not meet $U$. Under the canonical identification
\[
 H_0(Y,U)=\bigoplus_{C\in\mathcal C(Y,U)}\Z,
\]
we define the norm by
\[
 \nu_0^{Y,U}((a_C))=\sum_C|a_C|=\Size_0((a_C);Y,U).
\]
This proves the base case, including the required bound.

Suppose the norms have been constructed in degree $m-1$, where $m\ge1$. If $U=Y$, then $H_m(Y,Y)=0$, and we take $\nu_m^{Y,Y}=0$. Assume henceforth that $U\ne Y$. By~\eqref{eq:dimension-drop}, every $\Sigma_xY$ has geometric dimension at most $m-1$, so the inductive hypothesis applies. For every $\alpha\in H_m(Y,U)$ and $x\notin U$, it gives, together with Proposition~\ref{prop:directional-size},
\[
 \nu_{m-1}^{\Sigma_xY,\varnothing}(\mathcal D_x\alpha)
 \le m!\Size_{m-1}(\mathcal D_x\alpha;\Sigma_xY)
 \le(m+1)!\Size_m(\alpha;Y,U).
\]
The last bound is finite by Lemma~\ref{lem:finite-size} and independent of $x$. Thus the injective homomorphism $\mathcal D_U$ introduced in Subsection~\ref{subsec:inductive-step} takes values in the bounded product
\begin{equation}\label{eq:bounded-directional-map}
 \mathcal D_U:H_m(Y,U)\longrightarrow
 \prod_{x\in Y\setminus U}^{\mathrm{bd}}
 \bigl(H_{m-1}(\Sigma_xY),
                     \nu_{m-1}^{\Sigma_xY,\varnothing}\bigr).
\end{equation}

Pull back the supremum norm, setting
\begin{equation}\label{eq:inductive-norm}
 \nu_m^{Y,U}(\alpha)
   =\sup_{x\in Y\setminus U}
       \nu_{m-1}^{\Sigma_xY,\varnothing}(\mathcal D_x\alpha).
\end{equation}
The preceding estimate proves~\eqref{eq:coordinate-size-bound}. The terms in this supremum are nonnegative integers and are bounded above. Their supremum is therefore an integer. Since $\mathcal D_U$ is injective, the norm is positive on nonzero classes. The triangle inequality and integer homogeneity follow from the corresponding properties of the norms on the factors and the linearity of the directional maps. Thus $\nu_m^{Y,U}$ is an integer-valued homogeneous norm, as required.

Every nonzero class has norm at least one. Stepr\={a}ns's theorem therefore implies that $H_m(Y,U)$ is free abelian.
\end{proof}

\begin{proof}[Proof of Theorems~\ref{thm:free} and~\ref{thm:main}]
First let $Y$ be a $\CAT(1)$ space with $\gdim Y\le n$. Proposition~\ref{prop:bounded-coordinates} gives freeness of $H_n(Y,U)$ for every open $U\subset Y$.

For an open pair $U\subset V\subset Y$, the exact sequence of the triple contains
\[
 H_{n+1}(Y,V)\longrightarrow H_n(V,U)
                              \longrightarrow H_n(Y,U).
\]
The first group vanishes by~\eqref{eq:open-vanishing}. Hence $H_n(V,U)$ is a subgroup of a free abelian group and is free. This proves the $\CAT(1)$ case. Proposition~\ref{prop:local-to-global} gives Theorem~\ref{thm:free} for all CBA spaces. 
Proposition~\ref{prop:reduction} then proves Theorem~\ref{thm:main}.
\end{proof}

\end{document}